\tikzset{mytext/.style={font=\small, text=black}}
\tikzset{main node/.style={circle,fill=lime!30,draw,minimum size=0.5cm,inner sep=0pt},
            }
\def\BALL[#1](#2){\rput[t](#2){}%
        \pscircle[fillstyle=solid,fillcolor=#1!40](#2){5pt}}
\newtheorem{Theorem}{Theorem}
\newtheorem{Corollary}[Theorem]{Corollary}
\newtheorem{proposition}{Proposition}[section]
\newtheorem{lemma}[proposition]{Lemma}
\newtheorem{corollary}[proposition]{Corollary}
\newtheorem{theorem}[proposition]{Theorem}
\newtheorem{Question}[proposition]{Question}
\theoremstyle{definition}
\newtheorem{definition}[proposition]{Definition}
\def \<#1>{{\left\langle{#1}\right\rangle}}
\def\abs#1{\left\vert{#1}\right\vert}
\def\set#1{{\def\st{\;:\;}\left\{#1\right\}}}
\def\Z-{\overline{\mathbb Z}}
\def\Zl-{\overline{\mathbb Z}_\ell}
\def\Q-{\overline{\mathbb Q}}
\def\Ql-{\overline{{\mathbb Q}_\ell}}
\def\K-{\overline{K}}
\def\Fl-{\overline{{\mathbb F}_\ell}}
\def\FF{\mathbb F}
\def\PP{\mathbb P}
\DeclareMathOperator{\St}{St}
\DeclareMathOperator{\Aut}{Aut}
\numberwithin{equation}{section}
\title{Random subgroups of branch groups}
\author{Jorge Fariña-Asategui and Santiago Radi}
\address{Jorge Fariña-Asategui: Centre for Mathematical Sciences, Lund University, 223 62 Lund, Sweden -- Department of Mathematics, University of the Basque Country UPV/EHU, 48080 Bilbao, Spain}
\email{jorge.farina\_asategui@math.lu.se}
\address{Santiago Radi: Department of Mathematics, Texas A\&M University, 77843 College Station, U.S.A.
}
\email{santiradi@tamu.edu}
\keywords{Torsion elements, branch groups, Haar measure, pro-$p$ groups}
\subjclass[2020]{Primary: 20E08, 20E18; Secondary: 28C10}
\thanks{The first author is supported by the Spanish Government, grant PID2020-117281GB-I00, partly with FEDER funds. The first author also acknowledges support from the Walter Gyllenberg Foundation from the Royal Physiographic Society of Lund. The second author is supported by Grigorchuk's Simons Foundation Grant MP-TSM-00002045 and the department of Mathematics of Texas A\&M University.}
\begin{document}

\begin{abstract} We show that independent Haar-random elements in a super strongly fractal branch profinite group generate a free subgroup acting freely on the boundary of the tree. This improves a previous result of Abért (2005) for weakly branch profinite groups, where independent random elements were shown to generate free subgroups acting only almost freely on the boundary. Our result also generalizes the analogous result of Abért and Virág (2005) for iterated wreath products.

\end{abstract}
\maketitle

\section{introduction}
\label{section: introduction}

Random subgroups of groups are subgroups obtained by a probabilistic construction. One is usually interested in the following questions: How does a typical subgroup look like? What properties does a typical subgroup have?

Erd\H{o}s and Turán started the field of statistical group theory in a series of papers beginning in 1965. They established many important properties of (uniform) random elements of symmetric groups. For example, they computed the asymptotic distribution of the order of random elements in $\mathrm{Sym}(n)$ as $n\to\infty$ \cite{ErdosTuran}. In 1969, Dixon proved a seminal result in random generation of finite groups \cite{Dixon}: two random elements of the alternating group $A_n$ generate $A_n$ with probability tending to~1 as $n\to\infty$. Dixon's result has been successfully extended to all non-abelian finite simple groups \cite{Kantor, LS}. This line of investigation has been followed until today, resulting in many remarkable results; cf. \cite{AbertVirag, BurnesLiebeckShalev, JaikinPyber, LiebeckShalev1}.

Probabilistic questions in finite groups generalize easily to infinite profinite groups. Indeed, profinite groups may be studied via their finite quotients and they admit a well-behaved probability space structure with respect to a unique normalized Haar measure. A first natural probabilistic question to consider in infinite profinite groups is what kind of algebraic properties the subgroups generated by independent Haar-random elements satisfy. Furthermore, profinite groups often arise as groups acting on some set, so one may also wonder what kind of properties the action of such random subgroups has. A remarkable result in this vein was obtained by Abért in \cite{Abert}.

We say that an action of a group $G$ on a set $X$ is \textit{separating} if the pointwise stabilizer of any finite subset $S\subset X$ does not fix any point in the complement of~$S$. The action of a compact group $G$ on $X$ is \textit{topological} if every point stabilizer is closed in $G$ and of Haar measure zero. If the action of the compact group $G$ on $X$ is transitive, the set $X$ can be identified with the coset space $G/\mathrm{Stab}(x)$ for any $x \in X$, and $X$ inherits a $G$-invariant probability measure $\lambda$ from the Haar measure in $G$. In this case, the group $G$ is said to act \textit{almost freely} on $X$ if there exists a conull subset $Y\subseteq X$ (with respect to $\lambda$) such that  $Y$ is $G$-invariant and $G$ acts freely on $Y$. The aforementioned result of Abért reads as follows: 

\begin{theorem}[{see {\cite[Theorem 1.3]{Abert}}}]
\label{theorem: abert}
Let $G$ be a compact group acting topologically on a measurable space $X$ via a separating action. Then, for any $k \ge 1$, $k$ independent random elements in $G$ generate a free subgroup $H$ of rank $k$ almost surely. Moreover, if the $G$-action on $X$ is transitive, then almost surely $H$ acts almost freely on $X$.
\end{theorem}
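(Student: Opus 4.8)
Write $\mu$ for the Haar probability measure on $G$, write $\mu^k$ for the product measure on $G^k$, and identify a tuple $\mathbf g=(g_1,\dots,g_k)\in G^k$ with the homomorphism $F_k\to G$ sending the free generators to the $g_i$; for a reduced word $w\in F_k$ let $w(\mathbf g)$ denote its image. Since $F_k$ is countable, both conclusions reduce by a union bound to a single word. The plan is to deduce everything from the following \emph{core estimate}: for every nontrivial reduced $w\in F_k$ and every $x\in X$ one has $\mathbb{P}_{\mathbf g}[\,w(\mathbf g)x=x\,]=0$. Granting this, freeness is immediate, since fixing any $x\in X$ gives $\mathbb{P}[w(\mathbf g)=1]\le\mathbb{P}[w(\mathbf g)x=x]=0$, and summing over the countably many nontrivial $w$ shows that almost surely $F_k\to G$ is injective, i.e. $H$ is free of rank $k$. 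For the transitive case I would apply Fubini to the core estimate to get, for each fixed $w$,
\[
\int_{G^{k}}\lambda\bigl(\Fix(w(\mathbf g))\bigr)\,d\mu^{k}(\mathbf g)=\int_{X}\mathbb{P}_{\mathbf g}\bigl[w(\mathbf g)x=x\bigr]\,d\lambda(x)=0,
\]
so $\lambda(\Fix(w(\mathbf g)))=0$ almost surely; intersecting these conull events over all $w$, almost surely $\Fix(w(\mathbf g))$ is $\lambda$-null for \emph{every} nontrivial $w$. On that event, and on the event that $H$ is free, I set
\[
Y \;=\; X\setminus\bigcup_{h\in H}h\Bigl(\bigcup_{w\neq 1}\Fix\bigl(w(\mathbf g)\bigr)\Bigr).
\]
As $H$ is countable and $\lambda$ is $G$-invariant, the removed set is a countable union of $\lambda$-null sets, so $Y$ is conull; it is $H$-invariant by construction, and no nontrivial $h\in H$ fixes a point of $Y$, so $H$ acts freely on $Y$, giving the almost-free action.

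\textbf{The core estimate.} Fix $x=x_0$ and a reduced word $w=s_n\cdots s_1$ with $s_i=g_{c_i}^{\varepsilon_i}$ and $s_{i+1}\neq s_i^{-1}$, and consider the trajectory $x_0,\ x_1=s_1x_0,\ \dots,\ x_n=s_nx_{n-1}=w(\mathbf g)x_0$. I would reveal the randomness of $\mathbf g$ gradually, one application at a time, uncovering at step $i$ the value $g_{c_i}^{\varepsilon_i}x_{i-1}$. The event in question is exactly $\{x_n=x_0\}$, and the goal is to exhibit a \emph{fresh} application --- one whose input has not yet been probed for that generator --- whose outcome has a non-atomic conditional law, so that landing on the prescribed target has conditional probability $0$. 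The clean case is when some generator $g_j$ occurs \emph{once} in $w$: then, conditioning on all other generators, the single value of $g_j$ can be integrated out, and because the orbit measure $(g\mapsto g^{\pm1}y)_*\mu$ is non-atomic (its atoms would correspond to $\mu$-positive cosets of the $\mu$-null stabilizer $\Stab(y)$, using the \emph{topological} hypothesis), the target is hit with probability $0$. The substance is therefore the case in which every generator repeats.

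\textbf{The freshness lemma and the main obstacle.} To handle repeats I would reveal each generator's action point by point and isolate the following statement, which is where \emph{separating} enters. Suppose the value of $g_j$ has already been revealed on a finite set $D\subset X$ and the current input is $y\notin D$. Disintegrating $\mu$ along the evaluation map $g\mapsto(gd)_{d\in D}$, whose fibres are cosets of the closed subgroup $K=\Stab_{\mathrm{ptwise}}(D)$, the conditional law of $g_j^{\varepsilon}y$ is a translate of the orbit measure $(k\mapsto ky)_*\mu_K$, where $\mu_K$ is the Haar measure of $K$. Since the action is separating and $y\notin D$, the group $K$ does not fix $y$, so this measure is \emph{not} a point mass. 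The main obstacle, and the heart of the argument, is to upgrade ``not a point mass'' to \emph{non-atomic}: an atom at $z=k_0y$ has mass $\mu_K\bigl(\Stab_K(y)\bigr)$, which is positive exactly when the $K$-orbit of $y$ is finite. Ruling this out --- i.e. showing that pointwise stabilizers of finite sets act with infinite orbits on the complement --- is precisely where the combined force of the separating and topological hypotheses must be used, since neither alone visibly excludes a finite-orbit, finite-index point stabilizer inside $K$. A secondary (combinatorial) difficulty is the bookkeeping needed to guarantee that, along the trajectory of a \emph{reduced} word, some application is genuinely fresh and actually tests the return to $x_0$; the reduced condition $s_{i+1}\neq s_i^{-1}$ prevents immediate cancellation, but one must track when the trajectory revisits previously probed inputs. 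Once non-atomicity of the conditional law at the relevant fresh step is established, the prescribed target $x_0$ is a single point and is hit with conditional probability $0$, which closes the core estimate and hence the theorem.
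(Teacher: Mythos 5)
First, a caveat: the paper does not prove this statement; it is quoted from Ab\'ert's article and used as a black box, so there is no internal proof to compare against. Measured against Ab\'ert's original argument, your skeleton is the right one: reduce to the single-word estimate $\mathbb{P}[\,w(\mathbf g)x=x\,]=0$, obtain freeness by specializing at one point and almost-freeness by Fubini together with saturating the null set under the countable group $H$, and prove the estimate by revealing the generators' actions one step at a time along the trajectory, via a freshness lemma saying that the conditional law of $g_j^{\varepsilon}y$, given the values of $g_j$ on a finite set $D\not\ni y$, is a translate of the orbit measure of $y$ under $K=\Stab_{\mathrm{ptwise}}(D)$.

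However, as written the proposal stops short of a proof at exactly the two places you flag, and both must be closed. (1) The ``main obstacle'' --- upgrading ``$K$ does not fix $y$'' to ``the orbit measure of $K$ on $y$ is non-atomic'' --- is not a delicate interaction of the two hypotheses; it follows from separating alone by a short orbit argument you are missing. Suppose $Ky=\{y_1,\dots,y_m\}$ is finite with $y=y_1$; separating forces $m\ge 2$. Apply separating to the finite set $S=D\cup\{y_2,\dots,y_m\}$: its pointwise stabilizer is contained in $K$, hence permutes the orbit $Ky$, fixes $y_2,\dots,y_m$, and therefore by injectivity fixes $y_1\notin S$ --- a contradiction. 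So $Ky$ is infinite, and $\Stab_K(y)=K\cap\Stab(y)$ is a closed subgroup (this is where the topological hypothesis enters) of infinite index in the compact group $K$, hence $\mu_K$-null, so the orbit measure has no atoms. (2) The bookkeeping issue also has a clean resolution that you should make explicit rather than defer: run the induction on the full-probability event that every fresh step lands outside the finite set of all points seen so far (trajectory points and probed inputs and outputs). On that event $x_0,\dots,x_{i-1}$ are pairwise distinct and the only probed values of each generator are those along the trajectory, so step $i$ could fail to be fresh only if $x_{i-1}$ coincides with an earlier probed input of $g_{c_i}^{\varepsilon_i}$; distinctness reduces this to $s_i=s_{i-1}^{-1}$, which reducedness excludes. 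Hence every step is fresh, all the $x_i$ are distinct almost surely, and in particular $x_n\ne x_0$. Without these two arguments your write-up is a correct plan rather than a proof.
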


For many applications, the action of random subgroups needs to be (almost surely) free rather than almost free. However, the main tool used by Abért in his proof of \cref{theorem: abert} is Fubini's theorem, so one can only study the action of random elements up to a set of measure zero in $X$. Therefore, a different method is needed.

In the case of iterated wreath products acting on a regular rooted tree $T$, Abért and Virág introduced a new approach in \cite{AbertVirag} to study random elements of iterated wreath products based on stochastic processes. They defined the orbit tree of a random element (and of a random subgroup) and showed that it has the same distribution as a Galton-Watson process. This allows one to use the theory of Galton-Watson processes and branching random walks to deduce certain properties about random subgroups of iterated wreath products. In particular, they showed that the action on the boundary of the tree of a random subgroup of a level-transitive iterated wreath product is free almost surely; see \cite[Corollary~4.2]{AbertVirag}.

The downside of the approach in \cite{AbertVirag} is that it is very specific to iterated wreath products: the orbit tree of a random element in $G$ is Galton-Watson if and only if $G$ is an iterated wreath product. However, even if the orbit tree of a random element is not Galton-Watson, one might still hope to obtain probabilistic independence of sections which are far enough from each other. In this paper, we show that a sufficient condition for this probabilistic independence is that the group is super strongly fractal and branch; see \cref{lemma: sections are independent}. This should not come as a surprise, as super strongly fractal groups give rise to strongly mixing measure-preserving dynamical systems, by considering the natural action of the tree on the group via sections \cite{JorgeCyclicity}. Furthermore, the first author has shown in \cite{JorgeMarkov} that the dynamical systems associated to fractal branch groups are Markov-processes over a free semigroup; see \cite{BowenMarkov} for more details on Markov processes over free groups and semigroups.

Our main result in this paper generalizes the result of Abért and Virág in \cite[Corollary 4.2]{AbertVirag} and shows that for certain branch groups the action on $\partial T$ of the random subgroups in \cref{theorem: abert} is indeed free. Note that we call a subgroup generated by $k$ independent Haar-random elements a \textit{$k$-generated random subgroup}.

\begin{Theorem}
\label{Theorem: main result}
Let $G\le \mathrm{Aut}(T)$ be a super strongly fractal branch profinite group. Then almost surely, for every $k\ge 1$, a $k$-generated random subgroup of $G$ is a free group of rank $k$ which acts freely on $\partial T$.
\end{Theorem}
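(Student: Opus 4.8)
The plan is to use \cref{theorem: abert} to reduce everything to one analytic statement about a single random element, and then to prove that statement by a critical Galton--Watson extinction argument fed by \cref{lemma: sections are independent}. First I would record that a branch profinite group $G\le\Aut(T)$ acts on $\partial T$ transitively (level-transitivity plus closedness gives transitivity on $\partial T$ by a compactness argument), topologically (a point stabilizer $\Stab_G(\xi)=\bigcap_n\Stab_G(\xi_n)$ is closed and of measure zero, since $[G:\Stab_G(\xi_n)]=|L_n|\to\infty$ where $L_n$ is the set of level-$n$ vertices), and via a separating action (the non-trivial rigid stabilizers of a branch group allow one to move any boundary point outside a prescribed finite set while fixing that set). \cref{theorem: abert} then yields, almost surely and for all $k$, a free subgroup $H$ of rank $k$ acting almost freely. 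It remains to upgrade \emph{almost free} to \emph{free}, i.e. to show that the Abért-null set $\mathrm{NF}=\bigcup_{1\ne h\in H}\Fix_{\partial T}(h)=\bigcup_{1\ne w\in F_k}\Fix_{\partial T}\bigl(w(\mathbf g)\bigr)$ is in fact empty. As this is a countable union, it suffices to prove, for each fixed non-trivial word $w$, that $w(\mathbf g)$ has no fixed boundary point almost surely.

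The heart is the case $w=x_1$: a Haar-random $g\in G$ has no fixed point on $\partial T$ almost surely. Since $g$ fixes a boundary point iff its tree of fixed vertices is infinite (König's lemma), I would model this tree as a branching process, where a fixed vertex $v$ has as offspring those children of $v$ fixed by $g$, that is, the fixed points of the level-one action of the section $g_v$. Super strong fractality makes $g_v$ again Haar-random in $G$, and \cref{lemma: sections are independent} makes the sections at distinct explored vertices independent; thus, after descending to a scale where the independence applies, the fixed-vertex tree is a genuine Galton--Watson tree. Its offspring mean equals the average number of fixed points of the level action of a Haar-random element, which by the orbit-counting lemma is the number of orbits, hence $1$ by level-transitivity; the law is non-degenerate because the positive-measure set $\St_G(n)$ realizes the maximal value $|L_n|\ge2$. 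A non-degenerate critical Galton--Watson process dies out almost surely, so $g$ has no fixed boundary point. Inspecting cycles instead of fixed points upgrades this: a point is $g$-periodic iff some section $(g^s)_v$ along an $s$-cycle, being a product of $s$ independent Haar-random sections and hence itself Haar-random, has a fixed boundary point; a countable union over vertices shows a Haar-random $g$ has \emph{no periodic point at all}, which disposes of every power $w=x_1^m$ and hence of the rank-one theorem.

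For a general word I would route through the orbit tree of $H$. Suppose $\xi$ is fixed by some $1\ne h\in H$. If the $H$-orbit of $\xi$ is finite, then $\Stab_H(\xi_n)=H_\xi$ for all large $n$, a finite-rank free group whose free generators, by \cref{lemma: sections are independent}, section at $\xi_n$ to independent Haar-random elements of $G$; but all of these fix the boundary point of the subtree below $\xi_n$ cut out by $\xi$, so in particular one Haar-random element fixes a boundary point, contradicting the previous paragraph. Summing over the countably many vertices and orbit shapes excludes all finite-orbit bad points almost surely. If instead the orbit of $\xi$ is infinite, I would iterate the construction: the section image of $\Stab_H(\xi_n)$ below $\xi_n$ is again generated by independent Haar-random elements and retains a bad point, while the orbit sizes evolve by a branching process that is critical (offspring mean $1$ at each level, once more by orbit-counting and level-transitivity); criticality forces this process almost surely into the finite-orbit configuration already excluded.

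The main obstacles are exactly the two places where independence and criticality must be reconciled. \cref{lemma: sections are independent} supplies independence only for sufficiently separated vertices, so the Galton--Watson comparison of the second paragraph cannot be made one level at a time but must be installed after descending to a fixed deep scale; this is harmless precisely because criticality is scale-invariant, the orbit-counting lemma giving offspring mean equal to the number of orbits, which is $1$ at every level by level-transitivity. The genuinely delicate point is the infinite-orbit case: since the sections of a proper power need not be Haar-random, the per-level offspring mean of a single non-Haar element may exceed $1$, so one cannot simply dominate by a critical process. The argument must therefore exploit the exact self-similar, spectral-radius-one structure of the orbit-growth process to guarantee that any non-trivial stabilizer is eventually driven to a finite-orbit configuration, where the clean contradiction of the third paragraph closes the proof.
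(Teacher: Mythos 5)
Your overall strategy (use \cref{theorem: abert} for abstract freeness of the subgroup, then independence of sections to kill fixed boundary points word by word) matches the paper's in spirit, but two of your steps fail. First, the base case. You try to prove that a single Haar-random $g\in G$ has no fixed boundary point via a critical Galton--Watson extinction argument. For a group of finite type of depth $D>1$ the fixed-vertex tree of a random element is \emph{not} a Galton--Watson tree, and ``descending to a deep scale'' does not repair this: the obstruction is horizontal, not vertical. Two fixed descendants of a common vertex lying within $D-1$ levels of that vertex are $(D-1)$-cousins no matter how deep you go, so \cref{lemma: sections are independent} never applies to them and their offspring counts remain correlated. Without the i.i.d.\ structure, ``offspring mean $1$ at every level plus non-degeneracy'' does not force extinction --- and the paper's own example $G_{\mathcal{Q}}^{\mathcal{P}}$ in \cref{section: examples} is level-transitive (hence, by the orbit-counting lemma, has mean exactly one fixed vertex per level) yet has positive fixed-point proportion. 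What actually carries the base case is \cref{theorem: ssf FPP 0} ($\mathrm{FPP}(G)=0$ for super strongly fractal groups), which the paper imports as a black box from \cite{JorgeSantiFPP}; you should cite it rather than attempt to re-derive it by a branching-process argument that is false in this generality.

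Second, your reduction for general words through the orbit tree of $H$ does not close. In the finite-orbit case you assert that the free generators of $\Stab_H(\xi_N)$ section at $\xi_N$ to independent Haar-random elements; but those generators are (random) words in the $g_i$, their sections at $\xi_N$ are products of sections of the $g_i^{\pm 1}$ at the intermediate vertices of the trajectory, and independence requires checking that those vertices are pairwise not $(D-1)$-cousins --- which you do not do, and which is exactly the content that needs proving. You also explicitly concede that the infinite-orbit case is unresolved. The paper avoids this entire case analysis by inducting on the word length $\ell$: writing $w=x_1\dotsb x_\ell$ with prefixes $w_i$, the induction hypothesis gives a level $N$ beyond which every $w_i(\mathbf{g})$, $1\le i\le \ell-1$, moves every vertex; then for $v$ at level $N+D-1$ the trajectory $v,\,v\cdot w_1(\mathbf{g}),\dotsc,v\cdot w_{\ell-1}(\mathbf{g})$ consists of pairwise non-$(D-1)$-cousins by \cref{lemma: moving a vertex and its descendant}, so by \cref{align: property of sections} and \cref{lemma: sections are independent} the section $w(\mathbf{g})|_v$ is a product of independent Haar-random elements, hence Haar-random, hence fixes no end almost surely by \cref{theorem: ssf FPP 0}. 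Replacing your orbit-tree analysis by this induction is the missing idea.
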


As level-transitive iterated wreath products are super strongly fractal and branch, \cref{Theorem: main result} widely generalizes \cite[Corollary 4.2]{AbertVirag}, and gives a more detailed description of the action of the random subgroups in \cref{theorem: abert} for a more restrictive family of groups. \cref{Theorem: main result} applies to the closure of the most studied examples of branch groups such as the first Grigorchuk group \cite{GrigorchukBurnside,JoneFractal}, the periodic Grigorchuk-Gupta-Sidki groups \cite{GGSHausdorff, JoneFractal}, the iterated monodromy group of the complex polynomial $z^2+i$ \cite{IMG, RadiFiniteType2025}, or the Hanoi towers group in $3$ pegs \cite{GrigNekraSunic2006, RadiFiniteType2025}.

As the examples presented in \cref{section: examples} show, freeness of the action of random subgroups need not hold if one drops either of the assumptions in \cref{Theorem: main result}.

We remark that even if the approach to prove \cref{Theorem: main result} is based on probabilistic independence of sections as in \cite[Corollary 4.2]{AbertVirag}, the proof of this probabilistic independence requires completely different tools compared to \cite[Corollary 4.2]{AbertVirag}. In fact, our proof relies heavily on the techniques developed by the first author in~\cite{JorgeCyclicity}.

It is worth mentioning that \cref{Theorem: main result} opens the door to using similar ideas as in \cite{AbertVirag} to study random elements of a wide class of branch groups. This has further potential applications to random matrix theory, via the study of the natural matrix representations of the congruence quotients of super strongly fractal branch groups \cite{Evans}.

We now show two applications of \cref{Theorem: main result}. Recall that a group is said to be \textit{amenable} if it admits a left-invariant mean. In \cite{GrigorchukBurnside}, Grigorchuk constructed the first example of an amenable but not elementary amenable group. This group, nowadays known as the first Grigorchuk group, was also the first example of a group of intermediate growth. In general, any group of subexponential growth is amenable as an application of Kesten's criterion \cite{Kesten}. On the other hand, it is well known that non-abelian free groups are not amenable. By \cref{theorem: abert}, this implies that if $G$ is a compact topological group satisfying the assumptions in \cref{theorem: abert}, then a $k$-random subgroup, with $k \geq 2$, is not amenable almost surely.

Even if a group $G$ is not amenable, it may admit amenable actions, i.e. there may exist a $G$-set $X$ which admits a $G$-invariant mean. Quite remarkably, non-abelian free groups do admit faithful and transitive amenable actions by a posthumous result of Douwen; see \cite{Dou}. Further examples of non-amenable groups admitting a faithful and transitive amenable action were obtained by Glasner and Monod in \cite{GlasnerMonod} as free products of groups not having property~(F). Further faithful and transitive amenable actions of non-abelian free groups were constructed by Grigorchuk and Nekrashevych in \cite{GrigNekra} using groups acting on rooted trees.

In view of \cref{theorem: abert} and the aforementioned examples of amenable actions of non-abelian free groups, one may wonder whether the action on the boundary of the random subgroups in \cref{theorem: abert} of the tree can be amenable or not. Since a group admitting a free amenable action is amenable and a non-abelian free group is not amenable, \cref{Theorem: main result} yields a negative answer for super strongly fractal branch profinite groups:

\begin{Corollary}
    Let $G\le \mathrm{Aut}(T)$ be a super strongly fractal branch profinite group. Then for every $k\ge 2$, the action of a $k$-generated random subgroup of $G$ on $\partial T$ is not amenable almost surely. 
\end{Corollary}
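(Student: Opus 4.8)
The plan is to deduce the corollary directly from \cref{Theorem: main result} together with the classical fact that a group acting freely on a set which carries an invariant mean must itself be amenable. Throughout, I read ``amenable action of $H$ on $\partial T$'' in the sense relevant to the examples discussed above, namely the existence of an $H$-invariant mean on $\ell^\infty(\partial T)$ with $\partial T$ regarded as a (discrete) $H$-set; this is the notion under which Douwen's and Glasner--Monod's actions are amenable. Fixing $k\ge 2$ and letting $H$ be a $k$-generated random subgroup of $G$, \cref{Theorem: main result} provides a conull event on which $H$ is free of rank $k$ \emph{and} acts freely on $\partial T$. I would argue entirely on this event, so that the final conclusion holds almost surely.

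The one self-contained ingredient I would record is the amenability criterion. Suppose $H$ acts freely on a set $X$ admitting an $H$-invariant mean $\mu$ on $\ell^\infty(X)$. Fixing a transversal $R\subseteq X$ for the $H$-orbits, freeness lets one write each $x\in X$ uniquely as $x=h\cdot r$ with $h\in H$ and $r\in R$, and the formula $T(f)(h\cdot r)=f(h)$ then defines an $H$-equivariant positive unital operator $T\colon \ell^\infty(H)\to \ell^\infty(X)$. Consequently $\mu\circ T$ is an $H$-invariant mean on $\ell^\infty(H)$, so $H$ is amenable. Since $X=\partial T$ is treated merely as a set here, the transversal is selected via the axiom of choice and no measurability hypothesis is needed; this is exactly why the notion of amenable action must be taken on $\ell^\infty(\partial T)$ rather than on $L^\infty(\partial T,\lambda)$, for which the invariant measure $\lambda$ itself would trivially furnish an invariant mean.

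I would then conclude by contradiction on the conull event: if the action of $H$ on $\partial T$ were amenable, the criterion above (applied with $X=\partial T$, using the freeness supplied by \cref{Theorem: main result}) would force $H$ to be amenable; but $H$ is free of rank $k\ge 2$, hence non-abelian free and therefore non-amenable, a contradiction. Thus the boundary action is non-amenable on a conull event, i.e.\ almost surely. I expect no genuine obstacle in this argument: all the real difficulty is already packaged into \cref{Theorem: main result}, and once the freeness of the boundary action and the non-amenability of $H$ are known to hold simultaneously, their incompatibility is a formal consequence of the mean-pushforward construction.
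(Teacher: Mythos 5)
Your proposal is correct and follows exactly the paper's (very brief) argument: by \cref{Theorem: main result} the random subgroup is almost surely non-abelian free and acts freely on $\partial T$, and a group admitting a free action with an invariant mean is itself amenable, a contradiction. The only difference is that you spell out the standard mean-pushforward construction via a transversal, which the paper takes for granted.
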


A second application is to arithmetic dynamics and number theory. Free actions of random elements of the Galois groups associated to the iterates of a rational function $f$ are often enough to obtain results on different prime density questions concerning the iterates of the rational function $f$; see the surveys \cite{SurveyAD, JonesGalois} and \cite{BridyJones2022, JorgeSantiFPP, Jones2007, Jones15, Jones2008, Juul2016, Odoni}. Translated to the language of groups acting on rooted trees, if $G \leq \mathrm{Aut}(T)$ is a closed subgroup, one is interested in whether a random element of $G$ acts freely on the boundary $\partial T$ almost surely. More generally, if we denote by $\mu$ the Haar measure on $G$, we may define the \textit{fixed-point proportion} of $G$ as
$$\mathrm{FPP}(G) := \mu(\set{g \in G: \text{$g$ fixes an element in $\partial T$}}).$$
One is usually interested in whether $\mathrm{FPP}(G)=0$.

A recent result of the second author in \cite{Santi} shows that certain branch groups arising as Galois groups of the iterates of a polynomial can have non-zero fixed-point proportion; see the second example in \cref{section: examples}. In the same paper, it is also asked whether a $k$-random subgroup of a group with null fixed-point proportion must also have null fixed-point proportion almost surely; see \cite[Question 7.3]{Santi}. We answer this question strongly in the negative in \cref{section: examples} by providing  a locally finite super strongly fractal group whose finitely generated random subgroups have always non-zero fixed-point proportion. This example shows that a major obstruction for finitely generated random subgroups to have null fixed-point proportion is the group being locally finite. This leads to the following reformulation of \cite[Question 7.3]{Santi}:

\begin{Question}
\label{question: FPP random subgroups}
Let  $G \leq \mathrm{Aut}(T)$ be a non-locally finite closed subgroup with null fixed-point proportion. Let $H$ be a $k$-random subgroup for some $k\ge 1$. Is it $$\mathrm{FPP}(H)=0$$
almost surely?
\end{Question}

In view of this, we restrict ourselves to topologically finitely generated infinite  profinite groups. The following result suggests a positive answer to \cref{question: FPP random subgroups} in this case:

\begin{Theorem}
\label{Theorem: FPP of random subgroups}
Let $G \le \mathrm{Aut}(T)$ be a topologically finitely generated group such that $$\mathrm{FPP}(G)=0.$$
Then, a $k$-generated random subgroup of $G$ has null fixed-point proportion with probability tending to 1 as $k \to \infty$.
\end{Theorem}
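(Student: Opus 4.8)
The plan is to reduce the fixed-point proportion to a limit of proportions in the finite congruence quotients, and then to exploit that $k$ independent Haar-random elements generate each fixed finite quotient with probability tending to $1$. For $n\ge 0$ let $G_n$ denote the image of $G$ in $\mathrm{Aut}(T_n)$, where $T_n$ is the tree truncated at level $n$, and let $A_n\subseteq G$ be the set of elements fixing at least one vertex of level $n$. Since an automorphism fixing a vertex also fixes all of its ancestors, the $A_n$ are clopen and decreasing, and an element fixes a point of $\partial T$ if and only if it lies in $\bigcap_n A_n$ (by K\"onig's lemma). Hence $\mathrm{FPP}(G)=\lim_n \mu(A_n)=\lim_n p_n$, where $p_n$ is the proportion of elements of the finite group $G_n$ fixing a level-$n$ vertex; in particular the hypothesis $\mathrm{FPP}(G)=0$ is equivalent to $p_n\to 0$. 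The same description applies to the closure $H$ of the subgroup generated by $k$ independent Haar-random elements: writing $H_n$ for its image in $\mathrm{Aut}(T_n)$ and $r_n(H)=\mu_H(A_n\cap H)=\abs{A_n\cap H_n}/\abs{H_n}$, one has $\mathrm{FPP}(H)=\lim_n r_n(H)$, a non-increasing limit.

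The engine of the argument is a generation estimate at each fixed level. The projection $G\to G_n$ pushes the Haar measure of $G$ forward to the uniform measure on the finite group $G_n$, so the images of the $k$ generators are $k$ independent uniform elements of $G_n$. If they fail to generate $G_n$ they all lie in a common maximal subgroup, so a union bound gives $\Pr[H_n\ne G_n]\le \sum_{M<G_n \text{ maximal}} (\abs{M}/\abs{G_n})^k$, which tends to $0$ as $k\to\infty$ for each fixed $n$. On the event $\{H_n=G_n\}$ one has exactly $r_n(H)=p_n$.

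Combining this with the trivial bound $r_n(H)\le 1$ off that event, and with $\mathrm{FPP}(H)\le r_n(H)$, I obtain for every $n$ the first-moment estimate $\mathbb{E}[\mathrm{FPP}(H)]\le \mathbb{E}[r_n(H)]\le p_n+\Pr[H_n\ne G_n]$. Choosing a sequence $n=n(k)\to\infty$ slowly enough that $\Pr[H_{n(k)}\ne G_{n(k)}]\to 0$ (possible by diagonalization, since the failure probability tends to $0$ at each fixed level), together with $p_{n(k)}\to 0$, yields $\mathbb{E}[\mathrm{FPP}(H)]\to 0$. By Markov's inequality this already shows that for every $\varepsilon>0$ the proportion $\mathrm{FPP}(H)$ lies below $\varepsilon$ with probability tending to $1$; applying the same generation estimate to infinitely many independent generators shows moreover that the increasing union equals $G$ almost surely, so that $\mathrm{FPP}(H)\to 0$ almost surely as $k\to\infty$.

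The main obstacle is to upgrade this to $\mathrm{FPP}(H)=0$ exactly with probability tending to $1$, rather than merely arbitrarily small. The difficulty is that $\mathrm{FPP}(H)=0$ means $r_n(H)\to 0$ as $n\to\infty$, whereas the generation estimate controls $r_n(H)$ only at levels $n$ up to which the $k$ fixed generators still surject onto $G_n$; beyond that range $H_n\subsetneq G_n$ and there is a priori no bound on the fixer proportion $r_n(H)$. To close this gap I would first record the exact inclusion $\{\mathrm{FPP}(H)\ge\varepsilon\}\subseteq\{H\ne G\}$, valid because $r_n(H)\ge \mathrm{FPP}(H)\ge\varepsilon>p_n$ for all large $n$ forces $H_n\ne G_n$ at those levels, and then attempt to promote the first-moment bound to a Borel--Cantelli statement showing that, once $k$ is large, the event $\{r_n(H)\ge\varepsilon\}$ occurs for only finitely many $n$ almost surely. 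When $G$ is additionally positively finitely generated this is immediate, since then $H=G$ with probability tending to $1$ and $\mathrm{FPP}(H)=\mathrm{FPP}(G)=0$; obtaining the requisite control on $r_n(H)$ at deep levels for random subgroups without this extra hypothesis is the step I expect to require the most work.
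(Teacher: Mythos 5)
Your reduction to the finite congruence quotients and your observation that $\mathrm{FPP}(H)=0$ would follow from $\overline{H}=G$ are both in line with the paper's argument, but there is a genuine gap at exactly the step you flag at the end: you never obtain a bound on $\Pr[H_n\ne G_n]$ that is \emph{uniform in $n$}, and without one the first-moment/diagonalization argument only yields $\Pr[\mathrm{FPP}(H)\ge\varepsilon]\to 0$ for each fixed $\varepsilon$, i.e.\ that $\mathrm{FPP}(H)$ is small with high probability, not that it vanishes. Your union bound $\sum_{M<G_n}\left(\abs{M}/\abs{G_n}\right)^k$ over maximal subgroups degrades as $n$ grows, since the number of maximal subgroups of $G_n$ is in general unbounded in $n$; for an arbitrary topologically finitely generated profinite group (e.g.\ a free profinite group) no fixed $k$ makes this sum small uniformly in $n$, which is precisely the failure of positive finite generation that you correctly identify as the missing hypothesis. (Your closing remark about taking infinitely many independent generators also changes the statement: it concerns a nested sequence of random subgroups, not a single $k$-generated one for fixed $k$.)

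The missing idea is that $G$ is \emph{automatically} positively finitely generated in this setting, with an explicit uniform bound. Since $\pi_n(G)\le \Aut(T^n)$ and the order of $\Aut(T^n)$ is divisible only by primes at most $d$, no congruence quotient $\pi_n(G)$ has a section isomorphic to the cyclic group $C_p$ for a fixed prime $p>d$. The theorem of Borovik, Pyber and Shalev (\cref{theorem: random elements profinite tfg}) then shows that $k$ independent uniform elements of the $r$-generated group $\pi_n(G)$ generate it with probability greater than $2-\zeta(k-c(C_p)r)$, a bound independent of $n$; passing to the inverse limit, $k$ Haar-random elements topologically generate $G$ with probability at least $2-\zeta(k-c(C_p)r)\to 1$ as $k\to\infty$. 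On that event $\mathrm{FPP}(H_k)=\mathrm{FPP}(\overline{H_k})=\mathrm{FPP}(G)=0$, which is \cref{Theorem: FPP of random subgroups}. With this ingredient your argument closes; without it, it does not.
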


The proof of \cref{Theorem: FPP of random subgroups} is based on a result of Borovik, Pyber and Shalev in \cite{MaximalGrowth} related to positive finite generation of profinite groups. If $G$ is a topologically finitely generated closed subgroup of $W_p$, where $W_p$ is the iterated wreath product generated by a $p$-cycle with $p$ prime, then $G$ is pro-$p$ and thus a version of \cref{Theorem: FPP of random subgroups} may be proved by elementary means:

\begin{Corollary}
\label{Corollary: FPP of random subgroups}
Let $G \le W_p$ be a topologically $d$-generated pro-$p$ group such that
$$\mathrm{FPP}(G)=0.$$ 
Then, if $k \geq d$, a $k$-generated random subgroup of $G$ has null fixed-point proportion with probability at least 
$$\prod_{j = 0}^{d-1} \left(1 - \frac{1}{p^{k-j}} \right).$$
In particular this probability tends to 1 as $k \to \infty$.
\end{Corollary}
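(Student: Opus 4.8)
The plan is to reduce the statement to computing the probability that $k$ independent Haar-random elements topologically generate $G$. The crucial observation is that the fixed-point proportion of a subgroup depends only on its closure, so if the random generators $g_1,\dots,g_k$ topologically generate $G$, then the closure of $H=\langle g_1,\dots,g_k\rangle$ equals $G$, whence $\mathrm{FPP}(H)=\mathrm{FPP}(G)=0$. Thus the event $\{\mathrm{FPP}(H)=0\}$ contains the event that $g_1,\dots,g_k$ topologically generate $G$, and it suffices to show that the latter event has probability exactly $\prod_{j=0}^{d-1}(1-p^{-(k-j)})$.

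To compute this probability I would pass to the Frattini quotient. Since $G\le W_p$ and $W_p$ is a pro-$p$ group, $G$ is a topologically $d$-generated pro-$p$ group; by the Frattini (Burnside basis) theorem for pro-$p$ groups, $\Phi(G)=\overline{G^p[G,G]}$ is open, the quotient $G/\Phi(G)$ is isomorphic to the $\mathbb{F}_p$-vector space $\mathbb{F}_p^{d}$, and a subset of $G$ topologically generates $G$ if and only if its image spans $G/\Phi(G)$. Moreover, because $\Phi(G)$ is open and normal, the pushforward of the Haar measure on $G$ under the quotient map $G\to G/\Phi(G)\cong\mathbb{F}_p^{d}$ is the uniform probability measure. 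Consequently, the images of $g_1,\dots,g_k$ form $k$ independent uniformly random vectors in $\mathbb{F}_p^{d}$, and they topologically generate $G$ precisely when these $k$ vectors span $\mathbb{F}_p^{d}$.

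It then remains to count. A choice of $k$ vectors in $\mathbb{F}_p^{d}$ is the same as a linear map $\mathbb{F}_p^{k}\to\mathbb{F}_p^{d}$ sending the standard basis to those vectors, and the vectors span if and only if this map is surjective. The number of surjective linear maps $\mathbb{F}_p^{k}\to\mathbb{F}_p^{d}$ equals $\prod_{j=0}^{d-1}(p^{k}-p^{j})$ (equivalently, by dualizing, the number of linearly independent $d$-tuples in $\mathbb{F}_p^{k}$), while the total number of linear maps is $p^{kd}$. Dividing yields the probability
$$\prod_{j=0}^{d-1}\frac{p^{k}-p^{j}}{p^{k}}=\prod_{j=0}^{d-1}\left(1-\frac{1}{p^{k-j}}\right),$$
as claimed. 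Since $k\ge d$ forces every exponent $k-j\ge 1$, all $d$ factors lie in $(0,1)$ and the bound is strictly positive; as it is a product of a \emph{fixed} number $d$ of factors each tending to $1$ as $k\to\infty$, the probability tends to $1$.

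I expect the only genuinely nonroutine point to be the verification that the relevant event is exactly ``topological generation of $G$'' and that $\mathrm{FPP}$ is insensitive to passing to the closure; this is what converts the purely group-theoretic generation probability into the desired bound on $\mathrm{FPP}(H)=0$. Everything else is a standard Frattini-quotient reduction, valid because $G$ is pro-$p$, together with the classical count of spanning $k$-tuples over a finite field.
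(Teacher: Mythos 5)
Your proposal is correct and follows essentially the same route as the paper: both reduce the problem to the probability that $k$ Haar-random elements topologically generate $G$, pass to the Frattini quotient $G/\Phi(G)\cong\mathbb{F}_p^d$ where the pushforward of Haar measure is uniform, and compute the spanning probability via the rank/transpose (dualization) argument to get $\prod_{j=0}^{d-1}\left(1-p^{-(k-j)}\right)$, concluding with $\mathrm{FPP}(H)=\mathrm{FPP}(\overline{H})=\mathrm{FPP}(G)=0$ on that event.
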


In view of \cref{Theorem: main result}, we believe that random subgroups of super strongly fractal branch profinite groups have null fixed-point proportion almost surely, but the tools developed in this paper are not enough to prove this assertion.

\subsection*{\textit{\textmd{Organization}}} In \cref{section: Preliminaries} we give the necessary background needed in the subsequent sections. In \cref{section: random subgroups}, we prove the key lemma on probabilistic independence of sections and use it to prove \cref{Theorem: main result}. We also provide a proof of \cref{Theorem: FPP of random subgroups} in \cref{section: random subgroups}. Finally, in \cref{section: examples} we provide examples showing that none of the conditions in \cref{Theorem: main result} may be dropped.

\subsection*{\textit{\textmd{Notation}}} Groups will be assumed to act on the tree on the right so composition will be written from left to right. We shall write $v\cdot g$ for the action of a group element $g$ on a vertex $v$ of the tree. Finally, we denote by $\# S$ the cardinality of a finite set $S$.

\section{Preliminaries}
\label{section: Preliminaries}

\subsection{Regular rooted trees}
The \textit{$d$-regular rooted tree} $T$ is the infinite rooted tree with root~$\emptyset$, where every vertex has exactly $d$ descendants. The $d$-regular rooted tree $T$ may be identified with the free monoid on the set $X=\{1,\dotsc, d\}$. The set of vertices at a distance exactly $n \geq 1$ from the root forms the \textit{$n$-th level of $T$}, which will be denoted~$\mathcal{L}_n$. The vertices whose distance is at most $n$ from the root form the \textit{$n$-th truncated tree}~$T^n$. We define the \textit{boundary} $\partial T$ of the tree $T$ as the set of infinite words in $X$. For any vertex $v$ in $T$, the subtree rooted at $v$, which is isomorphic as a tree to $T$, is denoted~$T_v$. 

The following notion will play a central role in the present paper:

\begin{definition}[$m$-cousins]
We say that two distinct vertices $v,w\in \mathcal{L}_n$ are \textit{$m$-cousins} if there exists $1\le j\le m$ and $u \in \mathcal{L}_{n-j}$ such that both $v$ and $w$ are descendants of $u$. This is equivalent to saying that the vertices $v$ and $w$ are at distance at most $2m$ in the tree. 
\end{definition}

\begin{figure}[H]

        \centering
 \includegraphics{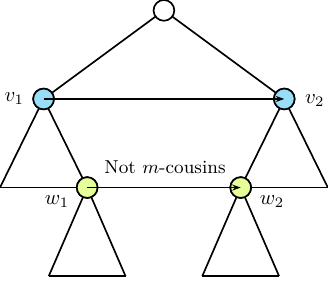}
        \caption{An illustration of vertices which are not $m$-cousins: $w_1$ and $w_2$ are at least $m$ levels below $v_1$ and $v_2$ respectively.}
        \label{figure: not m cousins}
\end{figure}

\subsection{Groups acting on regular rooted trees}

The group of automorphisms of~$T$, denoted $\Aut(T)$, is the group of graph automorphisms of $T$, i.e. functions fixing the root and preserving adjacency. In particular $\Aut(T)$ acts by permuting vertices at the same level of $T$.

The following observation will be useful to show that certain vertices are not $m$-cousins:

\begin{lemma}
\label{lemma: moving a vertex and its descendant}
    Let $g\in  \mathrm{Aut}(T)$. If $v\in \mathcal{L}_n$ is moved by $g$, then for any $w\in \mathcal{L}_m$ we have that $vw$ and $(vw)\cdot g$ are not $m$-cousins.
\end{lemma}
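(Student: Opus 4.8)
The plan is to work entirely in the word model of $T$, where vertices are finite words over $X$ and the ancestor relation is prefix containment. First I would record the reformulation implicit in the definition: two distinct vertices $x,y\in\mathcal{L}_N$ are $m$-cousins precisely when their deepest common ancestor lies at level at least $N-m$, i.e. when their longest common prefix has length at least $N-m$ (equivalently, when $x$ and $y$ are at distance at most $2m$ in $T$). So to prove the lemma it suffices to show that the longest common prefix of $vw$ and $(vw)\cdot g$ has length strictly less than $(n+m)-m=n$.

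The key step is to locate the length-$n$ prefixes of the two words. Since $g\in\Aut(T)$ fixes the root and preserves adjacency, it preserves levels and the ancestor (prefix) relation: if a vertex $u$ is an ancestor of a vertex $x$, then $u\cdot g$ is an ancestor of $x\cdot g$ at the same level. Applying this with $u=v$ and $x=vw$ --- noting that $v$ is exactly the length-$n$ prefix of $vw$ --- shows that $v\cdot g$ is the length-$n$ prefix of $(vw)\cdot g$. Meanwhile $v$ is the length-$n$ prefix of $vw$. By hypothesis $v$ is moved by $g$, so $v\ne v\cdot g$, and in particular these two length-$n$ prefixes already differ. Consequently $vw$ and $(vw)\cdot g$ are distinct, and their longest common prefix coincides with the longest common prefix of $v$ and $v\cdot g$, which has length at most $n-1$.

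To conclude, set $N=n+m$. The deepest common ancestor of $vw$ and $(vw)\cdot g$ then sits at level at most $n-1<n=N-m$, so the cousin criterion fails and the two vertices are not $m$-cousins. Equivalently, writing $\ell\le n-1$ for the length of their common prefix, their distance in $T$ equals $2(N-\ell)\ge 2(m+1)>2m$, again ruling out $m$-cousinhood.

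I do not expect a genuine obstacle here: the statement is essentially the observation that an automorphism moving $v$ must already separate the two words within the first $n$ coordinates, forcing any shared ancestor strictly above level $N-m$. The only point requiring care is the structural fact that automorphisms of $T$ preserve the prefix/ancestor relation level by level --- equivalently, that $g$ carries the subtree $T_v$ isomorphically onto $T_{v\cdot g}$ --- which is precisely what guarantees that $v\cdot g$, rather than some unrelated vertex, is the length-$n$ prefix of $(vw)\cdot g$.
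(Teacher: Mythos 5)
Your proof is correct and is essentially the paper's argument in dual form: the paper computes the graph distance $d_T(vw,(vw)\cdot g)=2m+d_T(v,v\cdot g)\ge 2m+1$ by noting the unique path passes through $v$ and $v\cdot g$, while you bound the level of the deepest common ancestor by $n-1$ via longest common prefixes; these are equivalent, and both hinge on the same structural fact that $v\cdot g$ is the level-$n$ ancestor of $(vw)\cdot g$ and differs from $v$. No gaps.
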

\begin{proof}
As $v \neq v\cdot g$, we have $d_T(v, v\cdot g) \ge 1$, where $d_T(\cdot,\cdot)$ denotes the usual graph distance in $T$. As $T$ is a tree, for every pair of vertices in $T$, there is a unique path from one to the other. Hence, we get
\begin{align*}
d_T(vw,(vw)\cdot g) &= d_T(vw,v) + d_T(v,v\cdot g) + d_T(v\cdot g, (vw)\cdot g) \\
&= 2m + d_T(v,v\cdot g) \ge  2m+1.\qedhere
\end{align*}
\end{proof}

Given a vertex $v\in T$, we write $\mathrm{st}(v)$ for the stabilizer of the vertex $v$. Furthermore, given $n \ge 1$, we define $\mathrm{St}(n)$, the \textit{stabilizer of level $n$}, as the subgroup of $\mathrm{Aut}(T)$ consisting of those automorphisms fixing level $n$ pointwisely. Each level-stabilizer $\mathrm{St}(n)$ is a normal subgroup of finite index in $\mathrm{Aut}(T)$. The filtration $\{\St(n)\}_{n\ge 1}$ induces a metrizable topology in $\mathrm{Aut}(T)$ called the \textit{congruence topology}. The group $\Aut(T)$ is a countably based profinite group with respect to this topology.

For each $n \ge 1$, we define the natural projection
$$\pi_n: \Aut(T) \rightarrow \Aut(T)/\St(n).$$
Note that if $g \in \Aut(T)$, then $\pi_n(g)$ represents the action of $g$ on the first $n$ levels of the tree. Therefore $\pi_n$ induces an isomorphism $\Aut(T)/\St_G(n)\cong\Aut(T^n)$. 

Let $v\in T$ and $g \in \Aut(T)$, the \textit{section of $g$ at $v$} is the unique automorphism $g|_v\in \mathrm{Aut}(T_v)$ such that
\begin{align*}
(vw)\cdot g=(v\cdot g)( w\cdot g|_v),
\end{align*}
for all $w \in T_v$. For all $g,h\in \mathrm{Aut}(T)$ and $w\in T_v$ we have
\begin{align}
g|_{vw} = (g|_v)|_w\quad \text{and}\quad gh|_v=g|_v\cdot h|_{v\cdot g}.
\label{equation: section two vertices}
\end{align}
By the second equality in \cref{equation: section two vertices}, it is easy to show inductively that
\begin{align}
\label{align: property of sections}
(g_1g_2\dotsb g_n)|_v=g_1|_v\cdot g_2|_{v\cdot {g_1}}\dotsb g_n|_{v\cdot {g_1g_2\dotsb g_{n-1}}}.
\end{align}

Under the identification $T_v=T$, we get $g|_v\in \mathrm{Aut}(T)$. In other words, we may define a map
\begin{align*}
\begin{split}
\varphi_v:\Aut(T) &\to \Aut(T)\\ 
g &\mapsto g|_v.
\end{split}
\end{align*}
The map $\varphi_v$ restricts to a group homomorphism on $\mathrm{st}(v)$.

More generally, for any $g\in G$, $n\ge 1$ and $v\in T $ we define the \textit{section of $g$ at $v$ of depth $n$}, as the unique automorphism $g|_v^n\in \mathrm{Aut}(T^n_v)$ such that \begin{align*}
(vw)\cdot g=(v\cdot g)(w\cdot g|_v^n),
\end{align*}
for all $w \in T_v^n$. For $n=1$ we call $g|_v^1$ the \textit{label} of $g$ at $v$. Furthermore, for any $n,m \ge 1$, and any subset of vertices $V \subseteq \mathcal{L}_n$, we further define the map $\varphi_V^{n,m}: \pi_{n+m}(\mathrm{St}(n)) \to \mathrm{Aut}(T^m)^{\# V}$ via
$$g|_\emptyset^{n+m} \mapsto \prod_{v \in V} g|_v^m.$$

Let us fix a subgroup $G \le \Aut(T)$. We say that a group $G \leq \Aut(T)$ is \textit{level-transitive} if the action of $G$ on each level $n\ge 1$ is transitive. We define vertex stabilizers and level stabilizers by restricting the ones defined for $\Aut(T)$, i.e. $\mathrm{st}_G(v) := \mathrm{st}(v) \cap G$ and $\mathrm{St}_G(n) := \mathrm{St}(n) \cap G$ for $v\in T$ and $n \ge 1$ respectively. Given $v\in T$, we define the \textit{rigid vertex stabilizer} $\mathrm{rist}_G(v)$ as the subgroup of $G$ consisting of all the automorphisms in $G$ which fix every vertex not in $T_v$. The \textit{$n$-th rigid level stabilizer} $\mathrm{Rist}_G(n)$ is the group generated by all the rigid vertex stabilizers $\mathrm{rist}_G(v)$ with $v\in \mathcal{L}_n$. Clearly $\mathrm{Rist}_G(n)$ is a normal subgroup of $G$, as $\mathrm{rist}_G(v)^g=\mathrm{rist}_G(v\cdot g)$ for any $v\in T$ and any $g\in G$. We say that $G$ is a \textit{branch group} if $G$ is level-transitive and $\mathrm{Rist}_G(n)$ is of finite index in $G$ for all $n \ge 1$. 

A group $G \leq \Aut(T)$ is \textit{self-similar} if for every $v\in T$ and $g \in G$, we have $g|_v \in G$ (under the identification $T_v=T$). Note that if $G$ is a self-similar group, then the image of the map $\varphi_v$ is contained in $G$ for every $v\in T$. A group $G$ is called \textit{fractal} if $G$ is self-similar, level-transitive and for every $v\in T$, we have $\varphi_v(\mathrm{st}_G(v)) = G$. A group $G$ is called \textit{super strongly fractal} if $G$ is self-similar, level-transitive and for all $n \ge 1$ and $v\in \mathcal{L}_n$, we have $\varphi_v(\mathrm{St}_G(n)) = G$.

Clearly, super strongly fractal implies fractal, but the converse implication is not true; see \cite[Section 3]{JoneFractal}.

A subgroup $K\le \mathrm{Aut}(T)$ is said to be \textit{branching} if $\psi(K)\ge K\times\dotsb\times K$. A self-similar, level-transitive group $G$ is \textit{regular branch} if it contains a finite-index branching subgroup $K\le G$. In particular, regular branch groups are branch.

\subsection{Groups of finite type}

Let $T$ be a $d$-regular rooted tree. For every $D\ge 1$ and $\mathcal{P}\le \Aut(T^D)$, we define the corresponding \textit{group of finite type} $G_\mathcal{P}$ as 
\begin{equation*}
G_\mathcal{P} := \set{g \in \Aut(T): g|_v^D \in \mathcal{P} \text{ for all $v \in T$}}.
\end{equation*}

The case $D = 1$ corresponds precisely to iterated wreath products. 

The following result relates groups of finite type and closed regular branch groups of $\Aut(T)$:

\begin{theorem}[{{see \cite[Theorem 3]{SunicHausdorff} and \cite[Proposition 7.5]{GrigorchukFinite}}}]
    \label{theorem: equivalence finite type}
    Let $G\le \mathrm{Aut}(T)$ be a closed subgroup. Then, the following are equivalent:
    \begin{enumerate}[\normalfont(i)]
        \item $G$ is of finite type of depth $D$;
        \item $G$ is regular branch over $\mathrm{St}_G(D-1)$.
    \end{enumerate}
\end{theorem}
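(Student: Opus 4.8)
The plan is to prove the equivalence of \cref{theorem: equivalence finite type} by establishing both implications via the structure theory of sections. Throughout, write $D' := D-1$ and recall that $\mathrm{St}_G(D')$ consists of automorphisms fixing the first $D'$ levels.

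\begin{proof}[Proof sketch]
(i) $\Rightarrow$ (ii). Suppose $G = G_\mathcal{P}$ for some $\mathcal{P} \le \Aut(T^D)$. First I would observe that $G$ is self-similar: if $g \in G$ and $u \in T$, then for every $v \in T_u$ the depth-$D$ section $(g|_u)|_v^D = g|_{uv}^D$ lies in $\mathcal{P}$, so $g|_u \in G$. Level-transitivity would be verified by exhibiting, for each pair of vertices on a level, an element of $\mathcal{P}$ realizing the required permutation and lifting it through the finite-type condition. The heart of this direction is to show that $K := \mathrm{St}_G(D')$ is a branching subgroup of finite index. Finite index is immediate since $\mathrm{St}_G(D') \supseteq \mathrm{St}_G(D)$ and the latter has finite index in the profinite (hence compact) group $G$. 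For the branching property I must produce, for each $v \in \mathcal{L}_1$, a copy of $K$ inside $\psi(K)$ supported at $v$; concretely, given $h \in K$, I would build $g \in G$ with $g|_v = h$, $g|_{v'} = \mathrm{id}$ for $v' \in \mathcal{L}_1 \setminus \{v\}$, and check that every depth-$D$ section of $g$ lies in $\mathcal{P}$. The key point is that $h \in \mathrm{St}_G(D')$ fixes the first $D'$ levels, so plugging $h$ below $v$ affects depth-$D$ sections only at vertices where the section either equals a section of $h$ or is trivial, and in both cases lies in $\mathcal{P}$ because $h \in G$.

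(ii) $\Rightarrow$ (i). Now suppose $G$ is closed and regular branch over $K = \mathrm{St}_G(D')$. I would set $\mathcal{P} := \pi_D(G) \le \Aut(T^D)$, the image of $G$ acting on the first $D$ levels, and claim $G = G_\mathcal{P}$. The inclusion $G \subseteq G_\mathcal{P}$ is essentially the definition of $\mathcal{P}$ together with self-similarity: for $g \in G$ and any $v$, the section $g|_v \in G$ by self-similarity, so its action on the first $D$ levels, namely $g|_v^D$, lies in $\pi_D(G) = \mathcal{P}$. The substantial inclusion is $G_\mathcal{P} \subseteq G$. Here I would use the closedness of $G$ and argue level by level: given $g \in G_\mathcal{P}$, I want to approximate $g$ by elements of $G$ agreeing with it on arbitrarily large truncated trees, then pass to the limit. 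The branching hypothesis $\psi(K) \ge K \times \cdots \times K$ is what allows one to independently correct sections at distinct vertices of a level without disturbing the already-matched top levels, while \cref{theorem: abert}'s surrounding framework on sections (equations \eqref{equation: section two vertices} and \eqref{align: property of sections}) organizes the bookkeeping.

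The main obstacle I anticipate is the inductive construction in (ii) $\Rightarrow$ (i): showing that membership of all depth-$D$ sections in $\mathcal{P} = \pi_D(G)$ is enough to force global membership in $G$. The naive hope would fail for a generic regular branch group, and the precise role of the \emph{depth} $D = D'+1$ matching the branching level $\mathrm{St}_G(D')$ is exactly what makes it work: an element whose depth-$D$ sections all lie in $\pi_D(G)$ can be successively adjusted, using elements of $\mathrm{rist}_G(v)$ for $v$ on level $D'$ supplied by the branching condition, so that the corrected element agrees with the target arbitrarily deep. One must verify carefully that these corrections at level $n$ do not spoil the agreement already achieved on levels below $n$, which is where the product structure $\psi(K) \ge K \times \cdots \times K$ is used, and then invoke closedness of $G$ to conclude that the Cauchy sequence of approximants converges to $g$ inside $G$. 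I would remark that the cited references \cite{SunicHausdorff, GrigorchukFinite} carry out this verification in full, and that the depth bookkeeping is the only genuinely delicate ingredient.
\end{proof}
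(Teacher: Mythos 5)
The paper does not prove this statement; it is imported from \cite{SunicHausdorff} and \cite{GrigorchukFinite} without proof, so there is no internal argument to compare yours against. Judged on its own merits, your sketch organizes both implications along the standard lines and identifies the right mechanisms: for (i)$\Rightarrow$(ii), the observation that plugging $h\in\mathrm{St}_G(D-1)$ below a single first-level vertex yields an automorphism whose depth-$D$ sections are either depth-$D$ sections of $h$ or trivial (note that the section \emph{at the root} is trivial precisely because $h$ fixes the first $D-1$ levels, not "because $h\in G$" as you write); and for (ii)$\Rightarrow$(i), level-by-level approximation of $g\in G_{\mathcal{P}}$ by elements of $G$ followed by an appeal to closedness.

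Two points need repair. First, condition (ii) includes level-transitivity by the paper's own definition of regular branch, and your proposed verification of level-transitivity of $G_{\mathcal{P}}$ (``exhibiting \dots an element of $\mathcal{P}$ realizing the required permutation'') is not a proof and cannot become one without extra hypotheses: for $\mathcal{P}$ trivial, $G_{\mathcal{P}}$ is trivial and not level-transitive, so (i)$\Rightarrow$(ii) fails as literally stated unless one restricts to level-transitive groups, as the cited sources do. This assumption must be made explicit rather than ``verified''. Second, in (ii)$\Rightarrow$(i) the correcting elements at stage $n$ are not supplied by $\mathrm{rist}_G(v)$ for $v$ at level $D-1$: to adjust the action at level $n+1$ without disturbing levels up to $n$, you need, for each $u\in\mathcal{L}_{n-D+1}$, an element of $G$ supported on $T_u$ whose section at $u$ is a prescribed element of $\mathrm{St}_G(D-1)$, and these come from iterating the branching inclusion to get $\psi_{n-D+1}(\mathrm{St}_G(n))\ge \mathrm{St}_G(D-1)\times\dotsb\times\mathrm{St}_G(D-1)$. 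The step your sketch leaves implicit, and the only place the finite-type hypothesis on $g$ is actually consumed, is that the discrepancy $gg_n^{-1}$ has at each such $u$ a depth-$D$ pattern lying in $\pi_D(G)$ and fixing the first $D-1$ levels, hence lying in $\pi_D(\mathrm{St}_G(D-1))$; without spelling this out, the induction does not close.
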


In the case of fractal groups, we have the following equivalence of branchness given by the first author in \cite{JorgeSpectra}:

\begin{theorem}[{see {\cite[Theorem 3.7]{JorgeSpectra}}}]
\label{theorem: equivalence of branch properties}
For fractal closed subgroups of $\Aut(T)$, the notions of finite type, regular branch and branch are all equivalent.
\end{theorem}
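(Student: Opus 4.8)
The plan is to prove the single implication that is not already contained in the excerpt. By \cref{theorem: equivalence finite type}, finite type and regular branch are equivalent for every closed subgroup of $\Aut(T)$, and regular branch groups are always branch. Thus the three notions coincide as soon as one shows that a fractal closed \emph{branch} group $G\le\Aut(T)$ is regular branch; \cref{theorem: equivalence finite type} then automatically upgrades this to finite type. So the entire content is the implication \emph{branch $\Rightarrow$ regular branch} under the fractal hypothesis.

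First I would extract the structure at the first level. Branchness gives that $\mathrm{Rist}_G(1)=\mathrm{rist}_G(1)\times\dots\times\mathrm{rist}_G(d)$ has finite index in $G$ and lies in $\mathrm{St}_G(1)$, so that $\psi(\mathrm{Rist}_G(1))=R_1\times\dots\times R_d$, where $R_i:=\varphi_i(\mathrm{rist}_G(i))\le G$ by self-similarity (the rigid factors with index $\ne i$ vanish under $\varphi_i$, so in fact $R_i=\varphi_i(\mathrm{Rist}_G(1))$). The key point, and the only place the fractal hypothesis is used, is that each $R_i$ is a finite-index \emph{normal} subgroup of $G$: since $\mathrm{rist}_G(i)\trianglelefteq\mathrm{st}_G(i)$ and fractalness gives that $\varphi_i$ maps $\mathrm{st}_G(i)$ onto $G$, the image $R_i$ is normal in $G$, and $[G:R_i]\le[\mathrm{st}_G(i):\mathrm{Rist}_G(1)]\le[G:\mathrm{Rist}_G(1)]<\infty$. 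Level-transitivity makes the $R_i$ uniform across the first level.

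Next I would assemble these pieces into a self-reproducing subgroup. Putting $N:=\bigcap_i R_i$, a finite-index normal subgroup of $G$, each element of $N$ placed in coordinate $i$ is realized as the section at $i$ of an element of $\mathrm{rist}_G(i)$ fixing everything outside $T_i$; hence $N\times\dots\times N\le\psi(\mathrm{Rist}_G(1))$. The delicate step is to upgrade this to a \emph{single} finite-index $K\le\mathrm{St}_G(1)$ satisfying the self-reproducing inequality $K\times\dots\times K\le\psi(K)$, i.e. to arrange that the elements witnessing the product inclusion can be taken inside $K$ itself rather than merely inside $\mathrm{Rist}_G(1)$. This is exactly the tension between \emph{which} sections are realizable (those in $R_i$) and \emph{by elements of which subgroup} they are realized ($\mathrm{Rist}_G(1)$), and I expect it to be the main obstacle, as well as the reason the theorem can fail for merely self-similar groups: a super strongly fractal group sidesteps the issue because $\varphi_v(\mathrm{St}_G(n))=G$ makes the section-filling immediate, whereas a fractal group controls sections only on vertex stabilizers, so branchness must be exploited to supply the required independent rigid pieces. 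I would resolve this by descending to a sufficiently deep level $n$: iterating the first-level analysis shows that $\varphi_v(\mathrm{rist}_G(v))$ is finite-index and normal in $G$ for every $v\in\mathcal{L}_n$, and, using fractalness together with level-transitivity to transport a prescribed configuration of bounded-depth sections uniformly across the (pairwise equivalent) subtrees, one obtains for large enough $n$ a rigid level stabilizer, equivalently a level stabilizer $\mathrm{St}_G(n)$, that is simultaneously finite-index and self-reproducing under $\psi$.

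Finally, once such a finite-index branching subgroup $K$ is produced, $G$ is regular branch by definition, and \cref{theorem: equivalence finite type} turns this into finite type. Combined with the elementary fact that regular branch groups are branch, this closes the cycle and establishes the equivalence of finite type, regular branch, and branch for fractal closed subgroups of $\Aut(T)$.
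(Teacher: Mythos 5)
A preliminary remark: the paper does not prove this statement itself but imports it from \cite{JorgeSpectra}, so your argument can only be assessed on its own terms. Your reduction is correct: by \cref{theorem: equivalence finite type} and the fact that regular branch groups are branch, everything comes down to showing that a fractal closed branch group is regular branch. Your first-level analysis is also sound: $R_x:=\varphi_x(\mathrm{rist}_G(x))$ is normal in $G$ because $\mathrm{rist}_G(x)\trianglelefteq \mathrm{st}_G(x)$ and $\varphi_x(\mathrm{st}_G(x))=G$ by fractality; it has index at most $[\mathrm{st}_G(x):\mathrm{Rist}_G(1)]$ since $\varphi_x(\mathrm{Rist}_G(1))=R_x$; it is independent of $x$ by level-transitivity; and $\psi(\mathrm{Rist}_G(1))=R_1\times\dots\times R_d$. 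This is the right first half of a proof.

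The gap is in the step you yourself flag as delicate. The proposed resolution --- descending to a deep level and ``transporting a prescribed configuration of bounded-depth sections'' to conclude that some $\mathrm{St}_G(n)$ is self-reproducing --- is not an argument: the assertion that $\mathrm{St}_G(n)^{\times d}\le\psi(\mathrm{St}_G(n))$ for large $n$ is exactly what must be proved, and iterating the first-level analysis only produces further finite-index normal subgroups $\varphi_v(\mathrm{rist}_G(v))$ without relating any of them to a level stabilizer. Note also that your sketch never uses the hypothesis that $G$ is \emph{closed} at this point, whereas that hypothesis is precisely what closes the gap. Indeed, $\mathrm{rist}_G(x)$ is closed in the compact group $G$ and $\varphi_x$ is continuous, so $R_x$ is a closed subgroup of finite index, hence open, hence contains $\mathrm{St}_G(m)$ for some $m\ge 1$, since the level stabilizers form a base of neighbourhoods of the identity in the congruence topology. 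Then $\mathrm{St}_G(m)^{\times d}\le R_1\times\dots\times R_d=\psi(\mathrm{Rist}_G(1))$, and any $\psi$-preimage in $\mathrm{St}_G(1)$ of a tuple with all coordinates in $\mathrm{St}_G(m)$ lies in $\mathrm{St}_G(m+1)\le\mathrm{St}_G(m)$; hence $\mathrm{St}_G(m)^{\times d}\le\psi(\mathrm{St}_G(m))$ and $K=\mathrm{St}_G(m)$ is the required finite-index branching subgroup, which moreover makes $G$ of finite type by \cref{theorem: equivalence finite type}. Without this openness argument (or a substitute for it) your proof is incomplete; with it, the ``delicate step'' disappears entirely.
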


\subsection{Probability on groups acting on the $d$-regular rooted tree}
\label{subsection: Probability on groups acting on dary trees}

Any closed subgroup $G\le \mathrm{Aut}(T)$ is a profinite group, so it admits a unique normalized Haar measure $\mu$. The Haar measure $\mu$ is supported on the Borel $\sigma$-algebra $\mathcal{B}_G$, which is generated by cone sets, where the \textit{cone set} $C_A$ corresponding to $A\subseteq \pi_n(G)$ is defined as 
$$C_A := \pi_n^{-1}(A) = \set{g \in G: \pi_n(g) = a \text{ for }a\in A}.$$
Moreover 
\begin{align*}
\mu(C_A) =  \frac{\# A}{|\pi_n(G)|}.
\end{align*}

Let $(\Omega, \PP, \mathcal{F})$ be a probability space and $\set{X_i}_{i \in I}$ a finite family of random variables, where $X_i: \Omega \rightarrow \Omega$ for all $i \in I$. We say that $\set{X_i}_{i \in I}$ is a collection of \textit{independent} random variables if for any collection of measurable sets $\set{A_i}_{i \in I} \subseteq \mathcal{F}$, we have 
\begin{align}
\PP \left(\bigcap_{i \in I} (X_i \in A_i) \right) = \prod_{i \in I} \PP(X_i \in A_i).
\label{equation: independence random variables}
\end{align}

It is well-known that in order to prove independence of $\set{X_i}_{i \in I}$, it is enough to check \cref{equation: independence random variables} in a generating set of the $\sigma$-algebra $\mathcal{F}$.

In this article, we will be interested in the probability space $(G, \mu, \mathcal{B}_G)$ associated to a closed subgroup $G\le \mathrm{Aut}(T)$. The following property, proved by the first author in \cite{JorgeCyclicity}, will be key in the proof of \cref{Theorem: main result}:

\begin{theorem}[{see {\cite[Theorem A]{JorgeCyclicity}}}]
Let $G \leq \Aut(T)$ be a closed fractal subgroup. Then, for every $v$, the section map $\varphi_v:G\to G$ is measuring-preserving.
\label{theorem: fractal measure preserving}
\end{theorem}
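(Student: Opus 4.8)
The plan is to prove that for a closed fractal subgroup $G \le \Aut(T)$, the section map $\varphi_v : G \to G$ given by $g \mapsto g|_v$ is measure-preserving with respect to the Haar measure $\mu$. Since $\varphi_v$ factors through sections at vertices of length one (because $g|_{x_1 x_2 \cdots x_n} = ((g|_{x_1})|_{x_2})\cdots|_{x_n}$ by the first identity in \cref{equation: section two vertices}), it suffices to treat the case where $v = x$ is a single vertex at level $1$; the general case then follows by composing finitely many measure-preserving maps. So I would reduce immediately to $\varphi_x$ for $x \in \mathcal{L}_1$.

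The key point is that to check $\mu(\varphi_x^{-1}(B)) = \mu(B)$ for all Borel $B$, it is enough to verify this on a generating family of the $\sigma$-algebra, namely the cone sets $C_A$ with $A \subseteq \pi_n(G)$, as noted in the text. First I would fix $n \ge 1$ and a cone set $C_A = \pi_n^{-1}(A)$, and analyze $\varphi_x^{-1}(C_A) = \{g \in G : g|_x \in C_A\} = \{g \in G : \pi_n(g|_x) \in A\}$. The natural strategy is to compute $\mu(\varphi_x^{-1}(C_A))$ by conditioning on the first-level action of $g$, i.e. on the label $g|_\emptyset^1 \in \Sym(X)$, and to disintegrate the Haar measure along the short exact sequence $1 \to \St_G(1) \to G \to \pi_1(G) \to 1$. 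Writing $G$ as a disjoint union of cosets of $\St_G(1)$ indexed by $\pi_1(G)$, each coset carries mass $1/|\pi_1(G)|$ and the measure restricted to a coset is a translate of the normalized Haar measure on $\St_G(1)$.

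The crucial computation is then to understand, for $g \in \St_G(1)$, how $\varphi_x$ interacts with the Haar measure on $\St_G(1)$. By fractality, $\varphi_x(\mathrm{st}_G(x)) = G$, and on the vertex stabilizer $\varphi_x$ is a genuine group homomorphism (as stated in the excerpt). The plan is to show that the pushforward of the normalized Haar measure on $\mathrm{st}_G(x)$ (or on $\St_G(1)$) under $\varphi_x$ is exactly the normalized Haar measure on $G$. Concretely, I would argue that for $g$ ranging uniformly over an appropriate subgroup, the section $g|_x$ is uniformly distributed over $G$: the surjectivity from fractality gives the image is all of $G$, and the fact that $\varphi_x$ is a continuous surjective homomorphism of profinite groups forces the pushforward of Haar measure to be Haar measure (a continuous surjective homomorphism between compact groups pushes Haar measure to Haar measure, since the pushforward is a translation-invariant probability measure on the target). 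Combining this with the coset decomposition and using level-transitivity to handle the averaging over which cosets contribute, one obtains $\mu(\varphi_x^{-1}(C_A)) = \mu(C_A)$.

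The main obstacle I anticipate is the bookkeeping in the disintegration step: one must carefully track how the first-level label of $g$ moves the vertex $x$, since $g|_x$ depends on whether $g \in \mathrm{st}_G(x)$ or permutes $x$ to another level-$1$ vertex, and the identity $gh|_v = g|_v \cdot h|_{v \cdot g}$ shows the section at $x$ mixes contributions from different cosets. The cleanest route is probably to reduce to $\St_G(1)$ first, where $x$ is fixed and $\varphi_x$ is an honest homomorphism, then invoke the Haar-measure-pushforward principle for surjective homomorphisms of compact groups together with fractality to conclude; verifying that this reduction correctly accounts for the normalization factors is where the care is needed.
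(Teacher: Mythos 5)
This statement is imported from \cite[Theorem A]{JorgeCyclicity} and is not proved in the paper, so there is no in-paper argument to compare against; I can only assess your proposal on its own terms. Your overall strategy is sound: reducing to a level-one vertex via $g|_{x_1x_2}=(g|_{x_1})|_{x_2}$, checking the measure-preserving property on cone sets, decomposing $G$ into cosets of a vertex stabilizer, and invoking the fact that a continuous surjective homomorphism of compact groups pushes Haar measure forward to Haar measure are all correct ingredients, and they do assemble into a proof.

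The one place where your plan as written would fail is the ``cleanest route'' you settle on at the end, namely reducing to $\mathrm{St}_G(1)$ and applying the pushforward principle there. For that you need $\varphi_x(\mathrm{St}_G(1))=G$, which is \emph{strong} fractality; mere fractality only guarantees $\varphi_x(\mathrm{st}_G(x))=G$, and these are genuinely different conditions (the paper itself points to \cite[Section 3]{JoneFractal} for the gap between fractality notions). The correct way to run your argument is to decompose $G$ into the finitely many right cosets $\mathrm{st}_G(x)h_i$: by the identity $gh|_v=g|_v\cdot h|_{v\cdot g}$ from \cref{equation: section two vertices}, for $s\in\mathrm{st}_G(x)$ one has $(sh_i)|_x=s|_x\cdot h_i|_{x\cdot s}=s|_x\cdot h_i|_x$, so on each coset $\varphi_x$ is the continuous surjective homomorphism $\varphi_x|_{\mathrm{st}_G(x)}$ followed by a fixed right translation. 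Each such composite pushes the normalized Haar measure of the coset to $\mu$, and averaging over the cosets with equal weights $1/[G:\mathrm{st}_G(x)]$ gives $(\varphi_x)_*\mu=\mu$. This also dissolves the bookkeeping worry you raise about cosets mixing: with right cosets of $\mathrm{st}_G(x)$ the vertex $x\cdot s$ equals $x$ throughout, so no mixing occurs. With that substitution your proof is complete.
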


If $k \geq 1$, then the unique Haar measure in $G^k$ is no more than the product measure $\mu^k$, i.e.
$$\mu^k(A_1 \times \cdots \times A_k) = \prod_{i = 1}^k \mu(A_i).$$

As a corollary to \cref{theorem: abert}, Abért concludes (in the more general context of weakly branch groups) the following result:

\begin{corollary}[{see {\cite[Corollary 1.4]{Abert}}}]
\label{theorem: abert random free subgroup branch group}
Let $G \leq \Aut(T)$ be a closed branch group. Then, for any $k \ge 1$, $k$ independent Haar-random elements in $G$ generate a free subgroup $H$ of rank $k$ almost surely.
\end{corollary}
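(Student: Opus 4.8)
The plan is to deduce the statement from \cref{theorem: abert}, applied to the natural action of $G$ on the measurable space $X=\partial T$. Thus it suffices to verify the three hypotheses of that theorem: that $G$ is compact, that the $G$-action on $\partial T$ is topological, and that it is separating. The first is immediate, since a closed subgroup of the profinite group $\Aut(T)$ is itself profinite, hence compact. Once the remaining two properties are in place, the freeness of $H$ follows directly from the first assertion of \cref{theorem: abert}, so the entire task reduces to checking that the boundary action is topological and separating.

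For the topological property, fix $\eta\in\partial T$ and write $\eta|_n\in\mathcal{L}_n$ for its length-$n$ prefix. Then $\Stab_G(\eta)=\bigcap_{n\ge 1}\mathrm{st}_G(\eta|_n)$. Each $\mathrm{st}_G(\eta|_n)$ is the preimage under $\pi_n$ of a subgroup of the finite quotient $\pi_n(G)$, hence clopen, so $\Stab_G(\eta)$ is closed. Since $G$ is branch it is level-transitive, so the $G$-orbit of $\eta|_n$ is all of $\mathcal{L}_n$ and $[G:\mathrm{st}_G(\eta|_n)]=d^n$; therefore $\mu(\Stab_G(\eta))\le\mu(\mathrm{st}_G(\eta|_n))=d^{-n}$ for every $n$, which forces $\mu(\Stab_G(\eta))=0$. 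This establishes the topological property.

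The separating property is the heart of the matter. Given a finite set $S=\{\xi_1,\dots,\xi_r\}\subset\partial T$ and a point $\eta\notin S$, I first choose $n$ large enough that $v:=\eta|_n$ differs from every $\xi_i|_n$; then $\xi_i\notin\partial T_v$ for all $i$, so every element of $\mathrm{rist}_G(v)$ fixes $S$ pointwise. It remains to produce a single element of $\mathrm{rist}_G(v)$ moving $\eta$, and here I would exploit the finite index of the rigid level stabilizers. Set $C_n:=[G:\mathrm{Rist}_G(n)]<\infty$. As $\mathrm{Rist}_G(n)=\prod_{u\in\mathcal{L}_n}\mathrm{rist}_G(u)$ is normal in $G$ and $G$ acts transitively on $\mathcal{L}_{n+m}$, all $\mathrm{Rist}_G(n)$-orbits on $\mathcal{L}_{n+m}$ have equal size and there are at most $C_n$ of them, so each has size at least $d^{n+m}/C_n$. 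Because the factors $\mathrm{rist}_G(u)$ with $u\neq v$ fix $T_v$ pointwise, the $\mathrm{Rist}_G(n)$-orbit of the vertex $\eta|_{n+m}\in T_v$ coincides with its $\mathrm{rist}_G(v)$-orbit. Choosing $m$ large enough that $d^{n+m}/C_n>1$, this orbit has more than one element, so some $g\in\mathrm{rist}_G(v)$ moves $\eta|_{n+m}$; such a $g$ moves $\eta$ while fixing $S$, which is exactly what separating requires.

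With all three hypotheses verified, \cref{theorem: abert} applies and gives, for every $k\ge 1$, that $k$ independent Haar-random elements of $G$ generate a free subgroup of rank $k$ almost surely. I expect the orbit-counting step for the separating property to be the only genuine obstacle: it is precisely where the branch hypothesis (finiteness of $C_n$) enters, ensuring that the rigid-stabilizer orbits of deep vertices along $\eta$ eventually become nontrivial, even though $\mathrm{rist}_G(v)$ need not act level-transitively on $T_v$.
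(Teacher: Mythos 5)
Your proposal is correct, and it is essentially the intended derivation: the paper does not reprove this statement but simply cites Abért's Corollary 1.4, which is itself obtained by applying \cref{theorem: abert} to the boundary action exactly as you do. All three verifications are sound; in particular, the separating step is the right one, and your orbit-counting argument (normality of $\mathrm{Rist}_G(n)$ plus level-transitivity forces all $\mathrm{Rist}_G(n)$-orbits on $\mathcal{L}_{n+m}$ to have equal size at least $d^{n+m}/[G:\mathrm{Rist}_G(n)]$, which exceeds $1$ for large $m$ and localizes to a $\mathrm{rist}_G(v)$-orbit) is a clean way to produce an element of $\mathrm{rist}_G(v)$ moving $\eta$ while fixing $S$ pointwise. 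The only remark worth making is that Abért's actual corollary is stated for the larger class of weakly branch groups, where $[G:\mathrm{Rist}_G(n)]$ need not be finite and one must instead argue from mere nontriviality of the rigid stabilizers $\mathrm{rist}_G(u)$ for all $u$ below $v$; your use of the finite-index hypothesis buys a shorter and more quantitative argument at the cost of that extra generality, which is all the present paper needs since it only invokes the statement for branch groups.
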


\subsection{Fixed-point proportion}

Given a group $G \leq \Aut(T)$, its \textit{fixed-point proportion} is defined via 
\begin{equation*}
\mathrm{FPP}(G) = \lim_{n \rightarrow \infty} \frac{\# \set{g \in \pi_n(G): \text{$g$ fixes a vertex in $\mathcal{L}_n$}}}{\abs{\pi_n(G)}}.
\end{equation*}

Clearly $\mathrm{FPP}(G) = \mathrm{FPP}(\overline{G})$ for the closure $\overline{G}$ of $G$ in $\Aut(T)$, as the definition of the fixed-point proportion only depends on the quotients with the level stabilizers.

By \cite[Lemma 16.2]{Fried1986}, if $S$ is a measurable subset of~$\overline{G}$, then 
\begin{equation*}
\mu(S) = \lim_{n \rightarrow \infty} \frac{\#\pi_n(S)}{\abs{\pi_n(G)}},
\label{equation: Haar measure limit}  
\end{equation*}
where $\mu$ is the unique Haar measure in $\overline{G}$. In particular, if we write
\begin{align*}
F :&= \set{g \in \overline{G}: g \text{ fixes a vertex in $\mathcal{L}_n$ for all $n\ge 1$}}\\
&= \mu(\set{g \in \overline{G}: \text{$g$ fixes an end in $\partial T$}}),
\end{align*}
we obtain 
\begin{equation*}
\mathrm{FPP}(\overline{G}) = \mu(F).
\label{ec_FPP_setS}
\end{equation*}

The main tool to compute the fixed-point proportion of self-similar groups is the following result proved by the authors in \cite{JorgeSantiFPP}:

\begin{theorem}[{see {\cite[Theorem 1]{JorgeSantiFPP}}}]
\label{theorem: ssf FPP 0}
Let $G\le \Aut(T)$ be a super strongly fractal group. Then 
$$\mathrm{FPP}(G)=0.$$
\end{theorem}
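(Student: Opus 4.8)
The plan is to read $\mathrm{FPP}(G)$ as the survival probability of a critical branching-type process and then to prove that this process goes extinct almost surely. Set $Z_n(g) = \#\{v \in \mathcal{L}_n : v \cdot g = v\}$ for the number of vertices fixed by $g$ at level $n$. The discussion preceding the statement identifies $\mathrm{FPP}(G) = \mu(F)$; since fixing $v \in \mathcal{L}_{n+1}$ forces fixing its level-$n$ ancestor, König's lemma gives $F = \{g : Z_n(g) \ge 1 \text{ for all } n\}$. As the events $\{Z_n \ge 1\}$ are decreasing, $\mathrm{FPP}(G) = \lim_n \mu(Z_n \ge 1)$, so it suffices to show that almost surely $Z_n = 0$ eventually. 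Since $\mathrm{FPP}$ and super strong fractality depend only on the closure, I would replace $G$ by $\overline{G}$ and assume $G$ closed, so that \cref{theorem: fractal measure preserving} is available.

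First I would show that $(Z_n)$ is a critical martingale. Let $\mathcal{F}_n$ be the $\sigma$-algebra generated by the action of $g$ on $T^n$, so $Z_n$ is $\mathcal{F}_n$-measurable and $Z_{n+1} = \sum_{v \in \mathcal{L}_n,\, v \cdot g = v} \#\Fix(g|_v^1)$, where the label $g|_v^1$ is viewed in the transitive group $P \le \Sym(X)$ describing the action of $G$ on $\mathcal{L}_1$. The key input is that, conditionally on $\mathcal{F}_n$ and on $v$ being fixed, the section $g|_v$ is Haar-distributed on $G$. Indeed, the fibre of $\pi_n$ over such a $\sigma$ is a coset $h_\sigma \, \mathrm{St}_G(n)$ with $h_\sigma \in G$, and writing $g = h_\sigma t$ with $t$ Haar on $\mathrm{St}_G(n)$ gives $g|_v = h_\sigma|_v \cdot t|_v$ (using $v \cdot h_\sigma = v$); super strong fractality, $\varphi_v(\mathrm{St}_G(n)) = G$, together with \cref{theorem: fractal measure preserving} shows that $t|_v$, and hence $g|_v$, is Haar on $G$. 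Consequently $g|_v^1$ is uniform on $P$, and Burnside's lemma (transitivity means a single orbit) yields $\mathbb{E}[\#\Fix(g|_v^1) \mid \mathcal{F}_n] = 1$. Summing over the $Z_n$ fixed vertices gives $\mathbb{E}[Z_{n+1} \mid \mathcal{F}_n] = Z_n$, so $(Z_n)$ is a non-negative martingale; by martingale convergence it converges almost surely to an integer-valued $Z_\infty$, and therefore $Z_n$ is eventually constant almost surely.

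It then remains to rule out stabilization at a positive value, i.e. to prove $\mu(Z_\infty = k) = 0$ for every $k \ge 1$. Here the non-degeneracy forced by level-transitivity enters: as $P$ is transitive of degree $d \ge 2$, Jordan's theorem provides a derangement in $P$, so the offspring variable $N = \#\Fix(g|_v^1)$ has mean $1$ but equals $0$ with probability $\delta > 0$ and is $\ge 2$ with positive probability. For iterated wreath products the sections at the children of a fixed vertex are independent, $(Z_n)$ is then literally a Galton--Watson process with critical offspring generating function $\Phi(x) = \tfrac{1}{|P|}\sum_{\sigma \in P} x^{\#\Fix(\sigma)}$, and extinction is classical, recovering \cite[Corollary 4.2]{AbertVirag}. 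In general I would instead argue that, conditionally on $Z_n = k \ge 1$, the count leaves the value $k$ with probability bounded below by a uniform $\varepsilon > 0$; a conditional Borel--Cantelli argument (in Lévy's form) would then contradict eventual constancy at a positive level and force $Z_\infty = 0$ almost surely, i.e. $\mathrm{FPP}(G) = 0$.

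The main obstacle is exactly this uniform lower bound when $k \ge 2$: it requires controlling the joint law of the sections at the $k$ distinct fixed vertices, and these are \emph{not} independent for a general super strongly fractal group, since $\mathrm{St}_G(1)$ embeds into a proper subgroup of $G \times \cdots \times G$. Independence of sections is available only when the underlying vertices are far apart (non-$m$-cousins), which is the delicate phenomenon exploited elsewhere in this paper. Overcoming the obstacle thus relies on the strong-mixing/independence-of-distant-sections features of super strongly fractal groups, rather than on the purely marginal computation that already suffices for the martingale property in the previous step.
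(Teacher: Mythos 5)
This statement is quoted from \cite{JorgeSantiFPP} and is not proved in the present paper, so there is no internal proof to compare against; judged on its own terms, your proposal has a genuine gap, and you say so yourself. The first half is sound: the identification $F=\{g: Z_n(g)\ge 1\ \forall n\}$, the fact that conditionally on the action on $T^n$ the section $g|_v$ at a fixed vertex $v$ is Haar-distributed (this follows from writing $g=h_\sigma t$ with $t$ Haar on $\mathrm{St}_G(n)$ and from the fact that $\varphi_v|_{\mathrm{St}_G(n)}$ is a \emph{surjective} continuous homomorphism onto $G$ by super strong fractality, hence pushes Haar to Haar --- note this is slightly more than \cref{theorem: fractal measure preserving}, which concerns $\varphi_v$ on all of $G$), the Burnside computation giving $\mathbb{E}[Z_{n+1}\mid\mathcal{F}_n]=Z_n$, and the conclusion that $Z_n$ stabilizes a.s.\ at some integer $Z_\infty$. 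The case $Z_\infty=1$ can indeed be excluded by a conditional Borel--Cantelli argument using the existence of a derangement in the transitive group $\pi_1(G)$. But the exclusion of $Z_\infty=k$ for $k\ge 2$, which is the actual content of the theorem, is only named as an ``obstacle'' and never carried out: knowing that each individual section at a fixed vertex is Haar gives no control on the joint law of the $k$ labels, and the naive bound (e.g.\ forcing one section to be a derangement) does not prevent another fixed vertex from compensating so that the sum stays at $k$.

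More seriously, the route you propose for closing the gap cannot work at the stated level of generality. \cref{lemma: sections are independent} (independence of sections at pairwise non-$(D-1)$-cousin vertices) is proved under the hypothesis that $G$ is of finite type, i.e.\ branch; the theorem you are proving assumes only super strong fractality. The paper's own \cref{section: examples} applies this theorem to the elementary abelian group $A\le W_p$, which is super strongly fractal but very far from branch, and for which the sections at sibling vertices are strongly correlated (indeed $\mathrm{St}_A(1)$ sits inside a proper, very thin subgroup of $A\times\dotsb\times A$). So any argument resting on independence of distant sections proves at best a special case (it does recover the iterated wreath product situation of \cite[Corollary 4.2]{AbertVirag}, where sibling sections are genuinely independent), but not the theorem as stated. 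A correct proof has to extract the vanishing of $\mu(Z_\infty=k)$ for $k\ge 2$ from super strong fractality alone, and that idea is missing from the proposal.
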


\section{Random subgroups}
\label{section: random subgroups}

In this section, we develop the probabilistic tools needed for the proof of \cref{Theorem: main result}. We conclude the section by proving \textcolor{teal}{Theorems} \ref{Theorem: main result} and \ref{Theorem: FPP of random subgroups}.

In view of \cref{theorem: equivalence of branch properties}, we shall assume in the following that a super strongly fractal branch profinite group is a group of finite type of depth $D$ for some $D\ge 1$.

\subsection{Probabilistic independence of sections}

First, we show that the sections of a random element at far enough vertices are independent Haar-random elements:

\begin{lemma}
\label{lemma: sections are independent}
Let $G\le \mathrm{Aut}(T)$ be a super strongly fractal group of finite type of depth~$D$. For $n\ge D$, let $V \subseteq \mathcal{L}_n$ be a set of vertices such that they are pairwise not $(D-1)$-cousins. Then, if $g$ is a Haar-random element of $G$, the sections $\{g|_v\}_{v\in V}$ form a collection of independent Haar-random elements of $G$.
\end{lemma}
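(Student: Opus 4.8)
The plan is to prove independence of the sections $\{g|_v\}_{v \in V}$ by reducing everything to a finite level and exploiting the super strong fractality together with the branch structure. Since the Borel $\sigma$-algebra $\mathcal{B}_G$ is generated by cone sets, by the remark following \cref{equation: independence random variables} it suffices to verify the product formula \eqref{equation: independence random variables} for events of the form $(g|_v \in C_{A_v})$, where $A_v \subseteq \pi_m(G)$ for a common depth $m \ge 1$. The event $g|_v \in C_{A_v}$ depends only on the section of $g$ at $v$ down to depth $m$, i.e. on $g|_v^m$, so the whole problem lives at level $n + m$ of the tree. Concretely, I would phrase independence as the statement that the pushforward of the Haar measure $\mu$ under the map $g \mapsto (g|_v^m)_{v \in V}$ equals the product of the pushforwards, each of which is the Haar measure of $G$ truncated at level $m$ by \cref{theorem: fractal measure preserving}.

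First I would condition on the event $g \in \mathrm{St}_G(n)$. Here the branch (equivalently, finite type of depth $D$) hypothesis enters decisively: because $V$ consists of vertices that are pairwise not $(D-1)$-cousins, the vertices in $V$ lie in distinct branches far enough apart that the rigid vertex stabilizers $\mathrm{rist}_G(v)$ for $v \in V$ act independently. The key algebraic fact I would extract from \cref{theorem: equivalence finite type} is that, since $G$ is of finite type of depth $D$, an automorphism in $\pi_{n+D-1}(\mathrm{St}_G(n))$ is determined \emph{independently} at each vertex of $\mathcal{L}_n$ by its depth-$(D-1)$ sections lying in $\mathcal{P}$, and the non-$(D-1)$-cousin condition guarantees that the sections at distinct $v, w \in V$ impose no joint constraints. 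I would make this precise by showing that the map
\begin{equation*}
\varphi_V^{n,m} : \pi_{n+m}(\mathrm{St}_G(n)) \to \Aut(T^m)^{\# V}, \qquad g|_\emptyset^{n+m} \mapsto \prod_{v \in V} g|_v^m,
\end{equation*}
has image equal to $\prod_{v \in V} \pi_m(G)$ and that each fiber has constant cardinality; equivalently, the induced map is a uniform surjection onto the product. The super strong fractality $\varphi_v(\mathrm{St}_G(n)) = G$ supplies surjectivity onto each factor, while the branch/finite-type structure upgrades this to surjectivity onto the full product with equidistributed fibers.

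Next I would translate this combinatorial uniformity into the measure-theoretic statement using $\mu(C_A) = \# A / |\pi_n(G)|$ and the fact that Haar measure on $G$ restricted to $\mathrm{St}_G(n)$ is, up to the normalizing index $[\,G : \mathrm{St}_G(n)\,]$, counting measure on the finite quotients. Conditioning on $\mathrm{St}_G(n)$ is harmless because $\mu(g \in \mathrm{St}_G(n)) > 0$ and—crucially—the law of $(g|_v^m)_{v \in V}$ given $g \in \mathrm{St}_G(n)$ coincides with its unconditional law; this last point follows because $\varphi_v$ is measure-preserving on all of $G$ by \cref{theorem: fractal measure preserving}, not merely on $\mathrm{st}_G(v)$. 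Putting the pieces together, for any choice of $A_v \subseteq \pi_m(G)$ the uniform surjectivity of $\varphi_V^{n,m}$ yields
\begin{equation*}
\mu\!\left(\bigcap_{v \in V} (g|_v \in C_{A_v})\right) = \prod_{v \in V} \frac{\# A_v}{|\pi_m(G)|} = \prod_{v \in V} \mu(g|_v \in C_{A_v}),
\end{equation*}
which is exactly \eqref{equation: independence random variables} on the generating cone sets, and hence independence.

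The main obstacle I anticipate is proving the uniform surjectivity of $\varphi_V^{n,m}$ onto the full product $\prod_{v \in V}\pi_m(G)$ with equidistributed fibers, rather than merely surjectivity onto each factor separately. Super strong fractality hands one each coordinate individually, but the independence statement requires that the joint distribution factor, and this is precisely where the finite-type depth-$D$ structure and the non-$(D-1)$-cousin hypothesis must be combined: one must check that the depth-$(D-1)$ label constraints defining $G$ never couple the sections at two vertices of $V$, which is exactly guaranteed by their being separated by more than $D-1$ levels below any common ancestor. I expect the careful bookkeeping here—verifying that an arbitrary prescribed tuple of depth-$m$ sections can be realized simultaneously by a single element of $\mathrm{St}_G(n)$ while respecting the finite-type defining relations—to be the technical heart of the argument, and I would handle it by an explicit construction gluing together independently chosen preimages of each $g|_v^m$ supported on the disjoint subtrees $T_v$.
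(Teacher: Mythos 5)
Your overall strategy coincides with the paper's: reduce independence to the generating cone sets, and show that the joint section map $\varphi_V^{n,m}$ surjects onto $\prod_{v\in V}\pi_m(G)$ with fibers of constant size. You have correctly located the technical heart of the argument. However, two of the justifications you propose would not survive being made precise, and the first one is exactly at that technical heart.

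The gluing step is carried out at the wrong level. You propose to realize an arbitrary tuple $(g_v)_{v\in V}$ by gluing preimages of the $g_v$ supported on the disjoint subtrees $T_v$. For $D\ge 2$ this fails: an automorphism that is trivial outside $T_v$ and acts as $h$ on $T_v$ has its depth-$D$ patterns at the ancestors of $v$ within distance $D-1$ determined partly by the top levels of $h$, so it belongs to $G$ only when $h\in\varphi_v(\mathrm{rist}_G(v))$, which contains $\St_G(D-1)$ but is in general a proper subgroup of $G$; hence gluing on the $T_v$ themselves only reaches $\prod_{v\in V}\pi_m(\varphi_v(\mathrm{rist}_G(v)))$, not the full product. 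The paper's construction—and the reason the hypothesis is phrased in terms of $(D-1)$-cousins—glues one layer higher: write $v=v_1v_2$ with $v_1\in\mathcal{L}_{n-(D-1)}$, use super strong fractality to find $s_v\in\St_G(D-1)$ with $s_v|_{v_2}=g_v$, and then use the branching $\psi_{n-D+1}(\St_G(n))=\St_G(D-1)\times\dotsb\times\St_G(D-1)$ from \cref{theorem: equivalence finite type} to place the $s_v$ at the pairwise distinct vertices $v_1$. You gesture at the right ingredients (the depth-$(D-1)$ separation of ancestors), but the construction as stated is the wrong one. Separately, your claim that the law of $(g|_v^m)_{v\in V}$ conditioned on $g\in\St_G(n)$ equals its unconditional law ``because $\varphi_v$ is measure-preserving on all of $G$'' does not follow: \cref{theorem: fractal measure preserving} controls only the marginal of a single section, not the joint law conditioned on an event at level $n$. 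What is actually needed—and what the paper proves—is that for every $b\in\pi_n(G)$ the number of $h\in\pi_{n+m}(G)$ with $\pi_n(h)=b$ and prescribed depth-$m$ sections at $V$ equals $|{\ker\varphi_V^{n,m}}|$, obtained by exhibiting the solution set as a coset of $\ker\varphi_V^{n,m}$ after translating by an arbitrary lift of $b$. This ``vertical independence'' is a genuine step of the proof, not a corollary of the single-vertex measure-preservation result.
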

\begin{proof}
Since the Borel algebra of $G$ is generated by the cone sets, it is enough to prove that $$\mu(\bigcap_{v\in V}\varphi_v^{-1}(C_{A_v})) = \prod_{v\in V}\mu(\varphi_v^{-1}(C_{A_v})),$$
for any $m\ge 1$ and any $A_v \subseteq \pi_m(G)$ for each $v\in V$.

On the one hand, by \cref{theorem: fractal measure preserving}, taking sections preserves the Haar measure:
\begin{align}
\label{align: rhs prob ind sections}
\prod_{v\in V}\mu(\varphi_v^{-1}(C_{A_v})) = \prod_{v\in V}\mu(C_{A_v}) = \prod_{v\in V} \frac{\# A_v}{\abs{\pi_m(G)}}.
\end{align}

On the other hand, we have 
\begin{align*}
\mu(\bigcap_{v\in V}\varphi_v^{-1}(C_{A_v})) &= \frac{\# \set{h \in \pi_{n+m}(G): h|_v^m \in A_v \text{ for all } v\in V}}{\abs{\pi_{n+m}(G)}} \\
&=\sum_{(b,a_v)\in \pi_n(G)\times \prod_{v\in V}A_v}\frac{\# \set{h \in \pi_{n+m}(G): h|_\emptyset^n = b, h|_v^m=a_v~\forall v\in V }}{\abs{\pi_{n+m}(G)}}.
\end{align*}

First, we show that the map $\varphi_V^{n,m}$ has image $\prod_{v\in V}\pi_m(G)$. Clearly, the image of $\varphi_V^{n,m}$ is contained in $\prod_{v\in V}\pi_m(G)$ as $G$ is self-similar, so let us prove that each element in $\prod_{v\in V}\pi_m(G)$ may be realized in the image  of $\varphi_V^{n,m}$.  Let $(g_v)_{v\in V}\in \prod_{v\in V}\pi_m(G)$. For each $v\in V$, let $v_1 \in \mathcal{L}_{n-(D-1)}$ be the unique vertex $D-1$ levels above $v$. As the vertices in $V$ are pairwise not $(D-1)$-cousins, we have $v_1= \widetilde{v}_1$ if and only if $v=\widetilde{v}$ for $v,\widetilde{v}\in V$. Let us write each $v = v_1 v_2$, where $v_2\in T^{[D-1]}$.  

As $G$ is super strongly fractal, for each $v\in V$ there exists $s_v \in \St_G(D-1)$ such that 
$$s_v|_{v_2} = g_v.$$
Now, as $G$ is regular branch over $\mathrm{St}_G(D-1)$, we have
$$\psi_{n-D+1}(\mathrm{St}_G(n))=\mathrm{St}_G(D-1)\times\overset{d^{n-D+1}}{\dotsb}\times \mathrm{St}_G(D-1).$$
Thus, there exists $s \in \St_G(n)$ such that, for every $v\in V$ we have
$$s|_{v_1} = s_v.$$
Hence 
$$\varphi_V^{n,m}(s)=(s|_v)_{v\in V}=(s_v|_{v_2})_{v\in V}=(g_v)_{v\in V}.$$

Now we show that for any tuple $(b,a_v)\in \pi_n(G)\times \prod_{v\in V}A_v$, we can find an element $h \in \pi_{n+m}(G)$ such that 
\begin{align}
\label{align: sections of h}
h|_\emptyset^n = b\quad \text{and}\quad h|_v^m=a_v \quad \text{for all }v\in V.
\end{align}

Indeed, let $g \in \pi_{n+m}(G)$ such that $\pi_n(g) = b$. As $\prod_{v\in V}\pi_m(G)$ is in the image of~$\varphi_V^{n,m}$, there exists $s \in \pi_{n+m}(\St_G(n))$ such that 
$$s|_v^m = a_v (g|_v^m)^{-1}\quad \text{for all } v \in V.$$
Then, setting $h:= sg$, we have 
$$\pi_n(h)=\pi_n(g)=b,$$
and for each $v\in V$ we have
$$h|_v^m=(sg)|_v^m = s|_v^m \cdot g|_v^m = a_v (g|_v^m)^{-1} g|_v^m = a_v$$ as wanted.

Now, if both $h,\widetilde{h}$ satisfy the conditions in \cref{align: sections of h}, then $h^{-1}\widetilde{h}$ satisfies both $\pi_n(h^{-1}\widetilde{h})=1$ and
$$(h^{-1}\widetilde{h})|_v^m =1$$
for every $v\in V$. In other words, we have the equality of cosets $\ker \varphi_V^{n,m} h=\ker \varphi_V^{n,m} \widetilde{h}$. Conversely, if $k\in \ker \varphi_V^{n,m}$ and $h$ satisfies the conditions in \cref{align: sections of h}, then $kh$ also satisfies the conditions in \cref{align: sections of h}:
$$\pi_n(kh)=\pi_n(k)\cdot  \pi_n(h)=\pi_n(h)=b$$
and 
$$(kh)|_v^m=k|_v^m\cdot h|_v^m=a_v$$
for each $v\in V$. Therefore
\begin{align}
\label{equation: horizontal vertical independence}
\mu(\bigcap_{v\in V}\varphi_v^{-1}(C_{A_v}))=\frac{\abs{\pi_n(G)} \cdot \prod_{v\in V} \# A_v}{\abs{\pi_{n+m}(G)}}\cdot |\ker \varphi_V^{n,m}|.
\end{align}
Hence, by comparing \textcolor{teal}{Equations (}\ref{align: rhs prob ind sections}\textcolor{teal}{)} and \textcolor{teal}{(}\ref{equation: horizontal vertical independence}\textcolor{teal}{)}, it is enough to prove that
$$|\ker \varphi_V^{n,m}|=\frac{|\pi_{n+m}(G)|}{|\pi_m(G)|^{\#V}\cdot |\pi_n(G)|}.$$
This follows from the first isomorphism theorem and the computation of the image of $\varphi_V^{n,m}$ above:
\begin{align*}
    |\ker \varphi_V^{n,m}|&=\frac{|\pi_{n+m}(\mathrm{St}_G(n))|}{|\varphi_V^{n,m}(\pi_{n+m}(\mathrm{St}_G(n)))|}=\frac{|\pi_{n+m}(\mathrm{St}_G(n))|}{|\pi_{m}(G)|^{\# V}}=\frac{|\pi_{n+m}(G)|}{|\pi_m(G)|^{\#V}\cdot |\pi_n(G)|}.\qedhere
\end{align*}
\end{proof}

Assuming that the group is branch and super strongly fractal, we enjoy of certain horizontal and vertical independence in the action of the group on the tree. The vertical independence appears because the action on the first $n$ levels is independent of the action of the sections at the vertices in $V$. This can be seen because in the numerator of \cref{equation: horizontal vertical independence}, the term $\abs{\pi_n(G)}$ is independent of the term $\prod_{v \in V} \# A_v$. The horizontal independence appears because the different actions below the vertices $V$ are independent, as in the product $\prod_{v \in V} \# A_v$ the terms are not related.

\subsection{Randomly evaluated words}

Now we are in position to prove \cref{Theorem: main result}, namely that a $k$-generated random subgroup of a super strongly fractal branch group is a free group of rank $k$ acting freely on $\partial T$ almost surely. The proof is similar to Abért's and Virág's proof in \cite[Proposition 4.1]{AbertVirag}, but with a few more subtleties, as we only have probabilistic independence of sections at vertices which are far enough away from each other. 

\begin{proof}[Proof of \cref{Theorem: main result}]

First, as $G$ is branch, by \cref{theorem: abert random free subgroup branch group} we have that $k$ Haar-random elements of $G$ generate a free subgroup $F_k$ of rank $k$ almost surely. Therefore, we just need to prove that the action of this subgroup on $\partial T$ is free almost surely.

For every $k \ge 1$ and $1\ne w\in F_k$, i.e. for $w$ a non-trivial word in $k$ letters, let us define the set
$$A_w := \set{(g_1,\dots,g_k) \in G^k: w(g_1,\dots,g_k) \text{ fixes finitely many vertices in $T$}},$$
where we view each word $w$ as the corresponding word map $w:G^k\to G$.

As there is only countably many words, it is enough to show that for every $k \ge 1$ and every $1\ne w\in F_k$, the set $A_w$ has measure $1$ in $G^k$, where $G^k$ is equipped with the normalized product measure $\mu^k$ induced by the Haar measure $\mu$ in $G$. We do this by induction on the length of $w$. The base case $|w|=1$ follows from \cref{theorem: ssf FPP 0}. Indeed, in this case $w$ is either $x_1$ or $x_1^{-1}$, and then 
\begin{align*}
\mu^1(A_w)&=\mu(A_w)=1-\mu(\text{elements in }G\text{ fixing an end in }\partial T)\\
&=1-\mathrm{FPP}(G)=1,
\end{align*}
where the last equality follows from \cref{theorem: ssf FPP 0} as $G$ is super strongly fractal.

Let us consider $w\in F_k$ such that $|w|=\ell$. By induction on the word length, we get $\mu^{k}(A_{\widetilde{w}})=1$ for every $\widetilde{w}\in F_k$ satisfying that $|\widetilde{w}|\le \ell-1$. Let us write $w = x_1 \dotsb x_\ell$. For every $1\le i\le \ell-1$, we further write $w_i = x_1 \dotsb x_i\in F_k$. Clearly, we have $|w_i|\le \ell-1$, so by the induction hypothesis $\mu^k(A_{w_i})=1$. In particular, given a random tuple $(g_1,\dotsc,g_k)$ there exists some level $N\ge 1$ such that for every $1\le i\le \ell-1$ the evaluation $w_i(g_1,\dots,g_k)$ moves every vertex at every level $n\ge N$.

Let us show that $w(g_1,\dotsc,g_k)$ does not fix an end in $\partial T$ almost surely. For that, it is enough to show that all its sections at level $N+D-1$ do not fix an end in $\partial T$ almost surely. In fact, if $w(g_1,\dotsc,g_k)$ fixes an end in $\partial T$ with positive probability, it must fix a vertex at level $N+D-1$ such that its section at that vertex fixes an end in $\partial T$ with positive probability.

Let $v\in \mathcal{L}_{N+D-1}$. Now, for any  $1\le i\le \ell-1$, we define $v_i\in T$ by 
$$v_i:=v\cdot {w_i(g_1,\dots,g_k)}.$$
Then, by \cref{align: property of sections} we get
$$w(g_1,\dotsc,g_k)|_v=x_1|_v\cdot x_2|_{v_1}\dotsb x_\ell|_{v_{\ell-1}}.$$
Furthermore, by \cref{lemma: moving a vertex and its descendant}, the vertices $v_1,\dotsc,v_{\ell-1}$ are not pairwise $(D-1)$-cousins, so the elements $x_1|_{v},x_2|_{v_1},\dotsc, x_\ell|_{v_{\ell-1}}$ are independent Haar-random elements by \cref{lemma: sections are independent}. Hence $w(g_1,\dotsc,g_k)|_v$ is a Haar-random element and thus it does not fix an end in $\partial T$ almost surely by \cref{theorem: ssf FPP 0}.
\end{proof}

\subsection{Fixed-point proportion of random subgroups}

We conclude the section by proving \cref{Theorem: FPP of random subgroups} and \cref{Corollary: FPP of random subgroups}.

Recall that if $G$ is a finite group, a \textit{section} of $G$ is a quotient $H/N$ where $H$ is a subgroup of $G$ and $N$ is a normal subgroup of $H$. A profinite group is said to be \textit{topologically finitely generated} if it contains a finitely generated dense subgroup. The following result is proved in \cite{MaximalGrowth} (note that there is a typo in the statement as it should be $2-\zeta(s)$ instead of $1-\zeta(s)$):

\begin{theorem}[{see {\cite[Theorem 1]{JorgeSantiFPP}}}]
For every finite group $F$ there is a constant $c(F)$ such that if $G$ is a finite $r$-generated group which does not have
sections isomorphic to $F$, then for all $s > 1$ such that
$k = c(F)r + s$ is an integer, then $k$ independent random elements of $G$ generate $G$ with probability greater than $2 - \zeta(s)$, where $\zeta$ is the Riemann zeta function.
\label{theorem: random elements profinite tfg}
\end{theorem}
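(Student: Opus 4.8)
The plan is to reduce the generation probability to a counting problem for maximal subgroups and then to bound that count. Write $P(G,k)$ for the probability that $k$ independent uniform elements generate $G$, and let $m_n(G)$ denote the number of maximal subgroups of $G$ of index $n$. A $k$-tuple fails to generate $G$ exactly when all of its entries lie in some common maximal subgroup, so a union bound over maximal subgroups gives
\begin{equation*}
1 - P(G,k) \;\le\; \sum_{M \text{ maximal}} [G:M]^{-k} \;=\; \sum_{n \ge 2} m_n(G)\, n^{-k}.
\end{equation*}
The whole theorem then follows once one proves the uniform maximal-subgroup estimate
\begin{equation*}
m_n(G) \;\le\; n^{\,c(F)\,r}\qquad\text{for all } n\ge 2,
\end{equation*}
valid for every finite $r$-generated group $G$ with no section isomorphic to $F$: substituting $k = c(F) r + s$ yields $\sum_{n\ge2} m_n(G)\, n^{-k} \le \sum_{n\ge2} n^{-s} = \zeta(s)-1$, whence $P(G,k) \ge 2 - \zeta(s)$. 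The strict inequality in the statement comes from the slack in this chain (the union bound over-counts tuples lying in several maximal subgroups, e.g.\ the identity, and the geometric series already dominates strictly).

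Everything thus rests on the maximal-subgroup estimate, and here I would split the maximal subgroups $M$ according to the socle type of the primitive quotient $G/\mathrm{core}_G(M)$. For the \emph{affine} case, where the minimal normal subgroup $V$ of $G/\mathrm{core}_G(M)$ is elementary abelian of order $n$, the subgroup $M$ corresponds to a complement of $V$; by Gaschütz's theory the complements above a fixed core are parametrized by the derivations $Z^1$, and a derivation is determined by its values on a generating set, so there are at most $|V|^{r} = n^{r}$ of them. A bounded-degree count over the admissible cores keeps the total affine contribution of degree linear in $r$; this part is elementary and independent of the classification.

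The hard case, and the main obstacle, is the \emph{non-abelian} socle case, where $\mathrm{soc}(G/\mathrm{core}_G(M)) \cong T^t$ for a non-abelian simple group $T$. Here I would invoke the O'Nan--Scott theorem to separate the almost simple, diagonal, product, and twisted wreath types, and bound the number of maximal subgroups of each type via classification-based estimates for the subgroup structure of almost simple groups (for instance that an almost simple group has a number of index-$n$ maximal subgroups that is polynomial in $n$ of absolute degree). The delicate point is that, without further hypotheses, the multiplicity of a fixed chief factor and the number of simple factors $t$ may grow, inflating the degree beyond any universal constant; iterated wreath products already defeat a bound with universal exponent. This is precisely where the assumption that $G$ has no section isomorphic to $F$ enters, as in \cite{MaximalGrowth}: it caps the non-abelian composition factors and the wreath-type complexity of $G$, forcing each relevant parameter to be bounded by a function of $F$ alone, and thereby pinning the exponent to $c(F)\,r$. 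Assembling the affine and non-abelian contributions gives the bound on $m_n(G)$, and the union bound above then completes the argument; I expect the CFSG-dependent non-abelian count to be by far the heaviest ingredient.
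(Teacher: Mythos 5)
This theorem is not proved in the paper at all: it is quoted from Borovik, Pyber and Shalev \cite{MaximalGrowth} (the citation key printed in the theorem header is a typo; the sentence introducing the theorem makes clear the intended source is \cite{MaximalGrowth}), so there is no in-paper proof to compare against. Your reduction is the standard route and is essentially the argument of that source. The union-bound step is complete and correct: a $k$-tuple fails to generate exactly when all entries lie in a common maximal subgroup, a fixed $M$ of index $n$ captures it with probability $n^{-k}$, and granting $m_n(G)\le n^{c(F)r}$ the substitution $k=c(F)r+s$ gives $1-P(G,k)\le\sum_{n\ge 2}n^{-s}=\zeta(s)-1$. For the strict inequality the cleanest justification is not ``slack in the union bound'' but the fact that a finite group contributes only finitely many nonzero terms to a series with infinitely many positive ones (and the trivial group is generated with probability $1>2-\zeta(s)$).

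The entire mathematical content of the statement is therefore the polynomial bound $m_n(G)\le n^{c(F)r}$, which is precisely \cite[Theorem 1]{MaximalGrowth}. Your sketch of its proof --- the Gasch\"utz/derivation count of complements in the abelian chief-factor case, and the O'Nan--Scott plus CFSG-based counting in the non-abelian socle case, with the ``no section isomorphic to $F$'' hypothesis used to cap the wreath-type complexity --- correctly identifies the structure of their argument and, in particular, correctly locates where the hypothesis on sections is indispensable. But as written the non-abelian case is asserted rather than carried out, so your text is an accurate outline of the known proof rather than a self-contained one; since the paper itself imports the result as a black box, this is the appropriate level of detail for present purposes.
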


We recall the statement of \cref{Theorem: FPP of random subgroups}: Let $T$ be the $d$-regular rooted tree and $G \le \Aut(T)$ a topologically finitely generated group such that $\mathrm{FPP}(G)=0$. Then, a $k$-generated random subgroup of $G$ has null fixed-point proportion with probability tending to 1 as $k \to \infty$.

\begin{proof}[Proof of \cref{Theorem: FPP of random subgroups}]
As $G$ is topologically finitely generated, there exists a number $r \geq 1$ such that $\pi_n(G)$ is $r$-generated for all $n \geq 1$. As $\pi_n(G) \leq \Aut(T^n)$ for any $n\ge 1$, none of the subgroups $\pi_n(G)$ can have elements of prime order $p>d$. Therefore, no section of $\pi_n(G)$ can be isomorphic to $C_p$, where $C_p$ denotes a cyclic group of order $p$. Hence, by \cref{theorem: random elements profinite tfg}, we get that $k$ independent Haar-random elements of $\pi_n(G)$ generate $\pi_n(G)$ with probability greater than $2 - \zeta(k-c(C_p)r)$ for every $n\ge 1$. Note that the lower bound $2 - \zeta(k-c(C_p)r)$ does not depend on~$n$. Since $\pi_n$ is measure-preserving, we conclude that $k$ independent Haar-random elements generate~$G$ (topologically) with probability greater than $2 - \zeta(k-c(C_p)r)$.

If $H_k$ is a subgroup of $G$ generated by $k$ independent Haar-random elements, then 
$$\mathrm{FPP}(H_k) = \mathrm{FPP}(\overline{H_k}) = \mathrm{FPP}(G) = 0$$ 
with probability greater than $2 - \zeta(k-c(C_p)r)$. As $k$ goes to infinity, we have $k-c(C_p)r \rightarrow +\infty$ and thus $\zeta(k-c(C_p)r) \rightarrow 1$, giving the result.
\end{proof}

Let $W_p$ be the group of $p$-adic automorphisms, i.e. the iterated wreath product corresponding to the subgroup generated by a $p$-cycle $\sigma\in \mathrm{Sym}(p)$.

Recall now the statement of \cref{Corollary: FPP of random subgroups}: If $G \le W_p$ is a topologically finitely generated pro-$p$ group with $d$ generators such that $\mathrm{FPP}(G)=0$, then, for $k \geq d$, a $k$-generated random subgroup of $G$ has null fixed-point proportion with probability at least 
$$\prod_{j = 0}^{d-1} \left(1 - \frac{1}{p^{k-j}} \right).$$
In particular this probability tends to 1 as $k \to \infty$.

\begin{proof}[Proof of \cref{Corollary: FPP of random subgroups}]
    For a pro-$p$ group $G\le W_p$, its maximal subgroups are in one-to-one correspondence with the maximal subgroups of its Frattini quotient $G/\Phi(G) \cong \mathbb{F}_p^d$. A $k$-random subgroup generates $G$ topologically if and only if its closure is not contained in any maximal subgroup of $G$. By looking at $G/\Phi(G)$, the probability of this happening is precisely the probability of $k$-random independent vectors in $\mathbb{F}_p^d$ generating the full space~$\mathbb{F}_p^d$. Let us denote this probability by $P_p^d(k)$. It is clear that $P_p^d(k) > 0$ if and only if $k \geq d$, as we need at least $d$ vectors to generate $\mathbb{F}_p^d$. The problem of having $k$-random independent vectors in $\mathbb{F}_p^d$ generating~$\mathbb{F}_p^d$ can be restated as the probability of a random $k \times d$-matrix $A$ with coefficients in~$\FF_p$ to have rank $d$. As $\mathrm{rank}(A) = \mathrm{rank}(A^t)$, this is equivalent to finding $d$-random linearly independent vectors in $\mathbb{F}_p^k$. Therefore $$P_p^d(k) =\frac{\prod_{j = 0}^{d-1} (p^k-p^j)}{p^{kd}} = \prod_{j = 0}^{d-1} \left(1 - \frac{1}{p^{k-j}} \right).$$

As in \cref{Theorem: FPP of random subgroups}, we have that the subgroup $H_k$ generated by $k$ Haar-random elements generates $G$ topologically with probability $P_p^d(k)$, in which case we get
$$\mathrm{FPP}(H_k) = \mathrm{FPP}(\overline{H}_k) = \mathrm{FPP}(G) = 0.$$
Finally, note that $P_p^d(k) \rightarrow 1$ as $k \rightarrow \infty$.
\end{proof}

\section{Counterexamples}
\label{section: examples}

In this section, we show that \cref{Theorem: main result} does not hold if one drops either the assumption on the super strong fractality or on the branchness of the group.

\subsection{Super strongly fractal groups}

Let $A\le W_p$ be the super strongly fractal group in \cite[Section 3]{JorgeCyclicity}, i.e. a realization of the elementary $p$-abelian group $ \prod_{n\ge 1} C_p$ as a super strongly fractal subgroup of $W_p$. The group~$A$ is no more than the group $G_\mathcal{S}$ with defining sequence $\mathcal{S}:=\{D_{p^n}(\sigma)\}_{n\ge 0}$; see \cite[Section 4]{JorgeSpectra}.

As $A$ is elementary $p$-abelian, any finitely generated subgroup is finite. This shows that a $k$-generated random subgroup $H_k$ of $A$ is isomorphic to $C_p\times\overset{k}{\ldots}\times C_p$ almost surely. Thus, the statement in \cref{Theorem: main result} does not hold for $A$. What is more, as $H_k$ is finite, its fixed point proportion is positive. Indeed
$$\mathrm{FPP}(H_k)=|H_k|^{-1}>0$$ 
as $A$ acts freely on the boundary of the $p$-adic tree. However, since $A$ is super strongly fractal it has null fixed-point proportion by \cref{theorem: ssf FPP 0}. Thus, this answers \cite[Question~7.3]{Santi} on the negative: there exists a profinite group with null fixed-point proportion such that a $k$-generated random subgroup has positive fixed-point proportion with probability~1.

The same construction above works for any $d$-regular rooted tree. Indeed, simply choose $\sigma$ a $d$-cycle and $\mathcal{S}:=\{D_{d^n}(\sigma)\}_{n\ge 0}$ and consider the group $G_\mathcal{S}$ instead.

\subsection{Groups of finite type}

Given $T$ the $d$-regular rooted tree, recall that we can see the vertices of $T$ as finite words over the alphabet $X := \set{1, \dots, d}$. Identifying the alphabet $X$ with $\mathbb{Z}/d\mathbb{Z}$, consider $\mathcal{Q}\trianglelefteq \mathcal{P}\le \mathrm{Sym}(d)$ defined via
$$\mathcal{P} := \set{x \mapsto ax+b: a \in (\mathbb{Z}/d\mathbb{Z})^\times, b \in \mathbb{Z}/d\mathbb{Z}},$$
and
$$\mathcal{Q} := \set{x \mapsto x+b: b \in \mathbb{Z}/d\mathbb{Z}}.$$
Define the group $G_\mathcal{Q}^\mathcal{P} \leq \Aut(T)$ as $$G_\mathcal{Q}^\mathcal{P} := \set{g \in \mathrm{Aut}(T): g|_v^1 \in \mathcal{P} \text{ and } (g|_v^1)(g|_w^1)^{-1} \in \mathcal{Q} \text{ for all $v,w\in T$}}.$$

It was proved by the second author in \cite[Chapter 6]{Santi} that the group $G_\mathcal{Q}^\mathcal{P}$ corresponds to the iterated Galois group of the polynomial $x^d+1$ over $\mathbb{Q}$; for the definition of an iterated Galois group, see for instance \cite[Section 2.5]{Santi}.

The group $G_\mathcal{Q}^\mathcal{P}$ is not super strongly fractal as the labels of $\mathrm{St}_{G_\mathcal{Q}^\mathcal{P}}(n)$ all lie in~$\mathcal{Q}$ for any $n\ge 1$ but $G_\mathcal{Q}^\mathcal{P}$ contains elements with labels in $\mathcal{P}\setminus \mathcal{Q}$, so $\varphi_v(\mathrm{St}_{G_\mathcal{Q}^\mathcal{P}}(n))$ is strictly contained in $G_\mathcal{Q}^\mathcal{P}$ for any $n\ge 1$ and $v\in \mathcal{L}_n$. However, the group $G_\mathcal{Q}^\mathcal{P}$ is a group of finite type by \cite[Proposition 4.2]{Santi}. In particular, $G_\mathcal{Q}^\mathcal{P}$ is closed and regular branch. Moreover, it is also proved in \cite[Section 5.1]{Santi} that $G_\mathcal{Q}^\mathcal{P}$ has positive fixed-point proportion for $d$ odd. This implies that a random subgroup does not act freely on the boundary $\partial T$ with probability $1$.

\bibliographystyle{unsrt}

\end{document}